\newtheorem{theorem}{Theorem}[section]
\newtheorem{definition}[theorem]{Definition}
\newtheorem{lemma}[theorem]{Lemma}
\newtheorem{corollary}[theorem]{Corollary}
\begin{document}
\textwidth 150mm \textheight 225mm
\title{Matching, odd $[1,b]$-factor and distance spectral radius of graphs with given some parameters
\thanks{Supported by the National Natural Science Foundation of China (Nos. 12001434 and 12271439), the Natural Science Basic Research Program of Shaanxi Province (Nos. 2023-JC-YB-070 and 2024JC-YBQN-0015).}}
\author{Zengzhao Xu$^{a}$, Weige Xi$^{a}$\footnote{Corresponding author.}, Ligong Wang$^{b}$\\
{\small $^{a}$ College of Science, Northwest A\&F University, Yangling, Shaanxi 712100, China}\\
{\small $^{b}$ School of Mathematics and Statistics, Northwestern Polytechnical University,}\\
{\small  Xi'an, Shaanxi 710129, P.R. China.}\\
{\small E-mail: xuzz0130@163.com; xiyanxwg@163.com; lgwangmath@163.com}\\}
\date{}
\maketitle
\begin{center}
\begin{minipage}{120mm}
\vskip 0.3cm
\begin{center}
{\small {\bf Abstract}}
\end{center}
{\small  For a connected graph $G$, let $\mu(G)$ denote the distance spectral radius of $G$. A matching in a graph $G$ is a set of disjoint edges of $G$. The maximum size of a matching in $G$ is called the matching number of $G$, denoted by $\alpha(G)$. An odd $[1, b]$-factor of a graph $G$ is a spanning subgraph $G_0$ such that the degree $d_{G_0}(v)$ of $v$ in $G_0$ is odd and $1\le d_{G_0}(v)\le b$ for every vertex $v\in V (G)$. In this paper, we give a sharp upper bound in terms of the distance spectral radius to guarantee $\alpha(G)>\frac{n-k}{2}$ in an $n$-vertex $t$-connected graph $G$, where $2\le k \le n-2$ is an integer. We also present a sharp upper bound in terms of distance spectral radius for the existence of an odd $[1,b]$-factor in a graph with given minimum degree $\delta$.

\vskip 0.1in \noindent {\bf Key Words}: \ Matching, Odd $[1,b]$-factor, Distance spectral radius, Connectivity, Minimum degree. \vskip
0.1in \noindent {\bf AMS Subject Classification (2020)}: \ 05C50, 05C35}
\end{minipage}
\end{center}

\section{Introduction }

 All graphs considered are finite, simple, and connected throughout this paper. For a graph $G$, we use $V(G)=\{v_1,v_2,\ldots, v_n\}$ and $E(G)$ to denote the vertex set and the edge set of $G$, respectively. The set of neighbors of the vertex $v_i$ is denoted by $N_G(v_i)$, which is defined as the set of vertices adjacent to $v_i$. The degree of the vertex $v_i$ in $G$ is the number of its neighbours, denoted by $d_G(v_i)$ (or simply $d(v_i)$), i.e., $d_G(v_i)=|N_G(v_i)|$. The minimum degree of $G$ is denoted by $\delta(G)$ (or simply $\delta$). For any two graphs $G_1$ and $G_2$, we use $G_1+ G_2$ to denote the disjoint union of $G_1$ and $G_2$. The join of $G_1$ and $G_2$, denoted by $G_1\vee G_2$, is the graph obtained from $G_1+G_2$ by adding all possible edges between $V(G_1)$ and $V(G_2)$. For $S\subseteq V(G)$, we use $G-S$ to denote the subgraph obtained from $G$ by deleting the vertices in $S$ together with their incident edges. For $E'\subseteq E(G)$, we use $G-E'$ to denote the subgraph obtained from $G$ by deleting the edges in $E'$. Let $K_n$ denote an $n$-vertex complete graph. The connectivity of $G$ is the minimum number of vertices whose deletion induces a non-connected graph or a single vertex. For $t\ge 0$, a graph $G$ is called $t$-connected if the connectivity of $G$ is at least $t$.

For a connected graph $G$ of order $n$, the distance between vertices $v_i$ and $v_j$ denoted by $d_{ij}$, is the length of the shortest path between $v_i$ and $v_j$. The distance matrix of $G$ is defined as $D(G)=(d_{ij})_{n\times n}$, where $(i, j)$-entry is $d_{ij}$. The distance spectral radius of $G$ is the largest eigenvalues of $D(G)$, denoted by $\mu(G)$.

For a graph $G$, a matching of $G$ is a set of pairwise nonadjacent edges of $G$. The maximum size of a matching in $G$ is called matching number of $G$, denoted by $\alpha(G)$. A vertex $v$ is said to be $M$-saturated if $v$ is incident to some edge of $M$. A matching $M$ is called perfect matching if every vertex of $G$ is $M$-saturated. Therefore, if a graph $G$ contains a perfect matching, it must have an even number of vertices and $\alpha(G)=\frac{|V(G)|}{2}$.

One of our results is to characterize the matching number of a graph using the distance spectral radius. Studying the matching of graphs using the spectral radius has received a lot of attention of researchers in recent years. For example, Feng et al. \cite{FYZ} gave a spectral radius condition for a graph with given matching number. O \cite{O} proved a lower bound for the spectral radius in an $n$-vertex graph to guarantee the existence of a perfect matching. Zhang \cite{Z} characterized the extremal graphs with maximum spectral radius among all $t$-connected graphs on $n$ vertices with matching number at most $\frac{n-k}{2}$, where $2\le k \le n-2$ is an integer. Liu et al. \cite{LYL} extended some results of \cite{O} and \cite{Z}, they proved sharp upper bounds for spectral radius of $A_{\alpha}(G)$ in an $n$-vertex $t$-connected graph with the matching number at most $\frac{n-k}{2}$. Zhang and van Dam \cite{ZD} gave a sufficient condition in terms of distance spectral radius for the $k$-extendability of a graph and completely characterized the corresponding extremal graphs. Guo et al. \cite{GLMM} gave a spectral condition for a graph to have a rainbow matching. For more literature on studying the matching of graphs using the spectral radius, please refer to \cite{HLZ,Il,KOSS,SYZL,ZL2}.

Inspired by \cite{LYL} and \cite{Z}, in this paper we firstly investigate the relation between the distance spectral radius of an $n$-vertex $t$-connected graph and its matching number.

$[a, b]$-factor plays important roles in solving the graph decomposability problem. An $[a, b]$-factor of a graph $G$ is defined as a spanning subgraph $G_0$ such that $a\le d_{G_0}(v)\le b$ for each $v\in V (G)$. An odd $[1,b]$-factor of a graph $G$ is defined as a spanning subgraph $G_0$ such that $d_{G_0}(v)$ is odd and $1\le d_{G_0}(v)\le b$ for each $v\in V (G)$. Obviously, a perfect matching is a special odd $[1,b]$-factor when $b=1$.

Recently, the existence of an $[a, b]$-factor in a graph has been investigated by many researchers. In \cite{O2}, O provided some conditions for the existence of an $[a, b]$-factor in an $h$-edge-connected $r$-regular graph. In \cite{FLL}, Fan et al. provided spectral conditions for the existence of an odd $[1, b]$-factor in a connected graph with minimum degree $\delta$ and the existence of an $[a, b]$-factor in
a graph, respectively. In \cite{LM}, Li and Miao considered the edge condition for a connected graph to contain an odd $[1, b]$-factor. For more literature on $[a, b]$-factor, please refer to \cite{HLL,ZL4,ZL3}

Motivated by \cite{FLL}, in this paper we provide a condition in terms of distance spectral radius for the existence of an odd $[1,b]$-factor in a graph with given minimum degree.

The rest of the paper is structured as follows. In Section 2, we recall some important known concepts and lemmas to prove the theorems in the following sections. In Section 3, we give a sharp upper bound in terms of the distance spectral radius to guarantee $\alpha(G)>\frac{n-k}{2}$ in an $n$-vertex $t$-connected graph $G$, where $2\le k \le n-2$ is an integer. In Section 4, we provide a sharp upper bound in terms of distance spectral radius for the existence of an odd $[1,b]$-factor in a graph with given minimum degree $\delta$.

\section{Preliminaries}

\quad\quad In this section, we give some concepts and useful lemmas which will be used in the follows. First of all, we give some known lemmas about matching number and $[1,b]$-factor in a graph $G$. Moreover, for any $S\subseteq V(G)$ of a graph $G$, $o(G-S)$ denotes the number of odd components in graph $G-S$.

\begin{lemma}(\cite{LP})\label{le:1}  \ Let $G$ be a graph of order $n$. Then
	$$\alpha(G)=\frac{1}{2}(n-\max\{o(G-S)-|S|: \textrm{for all} \ S \subseteq V(G)\}).$$
\end{lemma}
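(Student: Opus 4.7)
The plan is to prove the classical Berge--Tutte deficiency formula by showing that the \emph{deficiency} $\mathrm{def}(G) := n - 2\alpha(G)$ equals $d := \max_{S\subseteq V(G)}\{o(G-S)-|S|\}$. A preliminary parity observation is useful: for any $S$, $|V(G-S)| = n - |S|$, and the even components of $G-S$ contribute an even total while the odd components contribute a total of the same parity as $o(G-S)$, so $o(G-S) \equiv n - |S| \pmod 2$, whence $o(G-S)-|S| \equiv n \pmod 2$ and in particular $d\equiv n\pmod 2$.

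For the easy inequality $\alpha(G)\le \tfrac12(n-d)$, fix any $S\subseteq V(G)$ and any matching $M$ of $G$. Each odd component $C$ of $G-S$ has odd order, so $M$ cannot saturate every vertex of $C$ using only edges inside $C$; some vertex of $C$ must be either matched to a vertex of $S$ or left unsaturated by $M$. Since the vertices of $S$ can absorb at most $|S|$ such ``leftovers'', at least $o(G-S)-|S|$ odd components contribute an unsaturated vertex, giving $n-2|M|\ge o(G-S)-|S|$. Maximising over $M$ and $S$ yields $\mathrm{def}(G)\ge d$.

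For the reverse inequality $\alpha(G)\ge \tfrac12(n-d)$, I would augment $G$ by $d$ new vertices forming a clique $U\cong K_d$, with each vertex of $U$ also joined to every vertex of $G$, producing $G' = G\vee K_d$ on $n+d$ vertices (an even number, by the parity observation). To verify Tutte's $1$-factor condition for $G'$, take any $S'\subseteq V(G')$ and split into two cases. If $S'$ omits some added vertex $u$, then $u$ is adjacent in $G'$ to every vertex of $V(G')\setminus S'$, so $G'-S'$ is connected and $o(G'-S')\le 1\le |S'|$ (the case $S'=\varnothing$ is handled by parity, which forces the unique component to have even order). If $S'\supseteq U$, write $S' = S\cup U$ with $S\subseteq V(G)$; then $G'-S' = G-S$ and $o(G'-S') = o(G-S)\le |S|+d = |S'|$ by the definition of $d$. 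By Tutte's theorem, $G'$ has a perfect matching $M'$ of size $\tfrac{n+d}{2}$. Since the $d$ vertices of $U$ are covered by at most $d$ edges of $M'$, the restriction of $M'$ to $G$ contains at least $\tfrac{n+d}{2}-d = \tfrac{n-d}{2}$ edges, whence $\alpha(G)\ge \tfrac12(n-d)$.

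The main obstacle is the reverse direction, since one must exhibit an actual matching of the claimed size; the cleanest route is the reduction to Tutte's theorem via the $G\vee K_d$ augmentation together with the two-case verification of the Tutte condition above. Combining both inequalities yields $\mathrm{def}(G)=d$, which is equivalent to the stated formula for $\alpha(G)$.
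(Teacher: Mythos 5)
Your proof is correct. Note that the paper offers no proof of this lemma at all: it is quoted as the classical Berge--Tutte deficiency formula directly from the cited reference \cite{LP}, so there is no in-paper argument to compare against. Your route --- the counting bound $n-2|M|\ge o(G-S)-|S|$ for the easy direction, and the reduction of the reverse inequality to Tutte's $1$-factor theorem applied to $G\vee K_d$ --- is the standard textbook derivation. The two-case verification of Tutte's condition is complete: the parity observation $o(G-S)-|S|\equiv n\pmod 2$ both makes $n+d$ even and disposes of the $S'=\varnothing$ subcase, and the degenerate case $d=0$ is silently but correctly covered, since then $U=\varnothing$ and only your second case occurs, which is exactly Tutte's condition for $G$ itself. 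The only dependency is that Tutte's theorem is taken as known, which is the usual order of development.
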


\begin{lemma}(\cite{A})\label{le:2} \ Let $G$ be a graph and $b$ be a positive odd integer. Then $G$ contains an odd $[1, b]$-factor if and only if for every $S\subseteq V(G)$,
$$o(G-S)\le b|S|.$$
\end{lemma}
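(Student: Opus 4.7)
My plan is to split the biconditional into the easy direction by a direct parity count, and the hard direction by reduction to a known factor-existence theorem.

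For necessity, suppose $G$ admits an odd $[1,b]$-factor $F$ and fix $S\subseteq V(G)$. For each odd component $C$ of $G-S$, the handshake identity inside $C$ gives
$$\sum_{v\in C}d_F(v)=2\,|E(F)\cap E(G[C])|+|E_F(C,S)|,$$
so $|E_F(C,S)|$ and $\sum_{v\in C}d_F(v)$ share parity. Since every $d_F(v)$ is odd and $|C|$ is odd, the right-hand sum is odd, forcing $|E_F(C,S)|\geq 1$. Summing over the $o(G-S)$ odd components and using $d_F(v)\leq b$ at every $v\in S$ yields $o(G-S)\leq\sum_{v\in S}d_F(v)\leq b|S|$.

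For sufficiency, assume $o(G-S)\leq b|S|$ for every $S\subseteq V(G)$. My plan is to invoke Lov\'asz's parity $(g,f)$-factor theorem applied with $g\equiv 1$, $f\equiv b$, and odd parity prescribed at every vertex, which is legal because $b$ is odd. That theorem guarantees the desired factor exactly when a certain deficiency $\delta(S,T)$ is nonnegative for every pair of disjoint $S,T\subseteq V(G)$. Specialising to $T=\emptyset$ collapses $\delta$ to $b|S|-o(G-S)$, which is precisely the hypothesis; so the task reduces to showing that a minimiser of $\delta$ can always be taken with $T=\emptyset$.

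The technical core, and the main obstacle I foresee, is exactly this last reduction. I would carry it out by a discharging argument: move a vertex $v\in T$ into $S$ and compare the loss $d_{G-S}(v)-1$ (from removing $v$ from $T$) against the gain $b$ (from adding $v$ to $S$), while tracking the change in the odd-component count caused by $v$'s neighbours redistributing among the components of $G-(S\cup T)$. A careful parity bookkeeping at each modified component should show $\delta$ cannot strictly decrease, completing the proof. An alternative route would bypass the $(g,f)$-machinery by an explicit gadget construction—replacing each vertex by a clique-like structure on $b$ new vertices so that perfect matchings of the blown-up graph correspond bijectively to odd $[1,b]$-factors of $G$—and then appealing to Tutte's classical $1$-factor theorem; but in my experience ensuring that the degrees in the induced factor end up odd at every original vertex costs roughly the same amount of case analysis as the discharging argument above.
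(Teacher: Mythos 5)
The paper does not prove this lemma at all; it is quoted verbatim from Amahashi's 1985 paper with a citation, so there is no internal proof to compare against. Judged on its own terms, your proposal is half a proof. The necessity direction is complete and correct: the handshake count $\sum_{v\in C}d_F(v)=2|E(F)\cap E(G[C])|+|E_F(C,S)|$ together with the two parity facts (each $d_F(v)$ odd, $|C|$ odd) does force at least one $F$-edge from each odd component into $S$, and the bound $\sum_{v\in S}d_F(v)\le b|S|$ finishes it.

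The sufficiency direction, however, has a genuine gap, and you have located it yourself: everything hinges on showing that a minimiser of the Lov\'asz deficiency $\delta(S,T)$ can be taken with $T=\emptyset$, and you only sketch this. The sketched ledger --- gain $b$ versus loss $d_{G-S}(v)-1$ when moving $v$ from $T$ to $S$ --- does not close as stated, for two reasons. First, enlarging $S$ also decreases $d_{G-S}(u)$ for every $u\in T$ adjacent to $v$, so the $T$-sum loses more than just $v$'s own term. Second, the components of $G-(S\cup T)$ do not change, but the parity functional $e(C,T)+\sum_{w\in C}f(w)$ defining $\tau(S,T)$ flips for every component joined to $v$ by an odd number of edges, so $\tau$ can jump by as much as $\deg(v)$, which is not controlled by $b$. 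So ``$\delta$ cannot strictly decrease'' is exactly the assertion that needs a proof, not a consequence of the bookkeeping you describe; the alternative blow-up-plus-Tutte route you mention has the same unresolved core (arranging that the induced degrees are odd). Since this is a classical cited result, the clean options are either to cite Amahashi (as the paper does), or to actually carry out the $T=\emptyset$ reduction / gadget construction in full; as written, the backward implication is a plan rather than a proof.
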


Equitable quotient matrix plays an important role in the study of spectral graph theory. Thus, we will give the definition of the equitable quotient matrix and its some useful properties.

\noindent\begin{definition}(\cite{YYSX})\label{D1} Let $M$ be a complex matrix of order $n$ described in the following block form
	\begin{equation*}
		M=\begin{bmatrix}
			M_{11} & \cdots & M_{1t} \\
			\vdots& \ddots & \vdots \\
			M_{t1} &\cdots & M_{tt}
		\end{bmatrix},
	\end{equation*}
where the blocks $M_{ij}$ are the $n_i\times n_j$ matrices for any $1\le i,j\le t$ and $n=n_1+n_2+\cdots+n_t.$ For $1\le i,j\le t$, let $b_{ij}$ denote the average row sum of $M_{ij}$, i.e. $b_{ij}$ is the sum of all entries in $M_{ij}$ divided by the number of rows. Then $B(M)=(b_{ij})$(or simply $B$) is called the quotient matrix of $M$. If, in addition, for each pair $i$, $j$, $M_{ij}$ has a constant row sum, then $B$ is called the equitable quotient matrix of $M$.
\end{definition}

\noindent\begin{lemma}(\cite{YYSX})\label{le:10} Let $B$ be the equitable quotient matrix of $M$, where $M$ is as shown in Definition \ref{D1}. In addition, let $M$ be a nonnegative matrix. Then the spectral radius relation satisfies $\rho(B)=\rho(M)$, where $\rho(B)$ and $\rho(M)$ denote the spectral radii of $B$ and $M$ respectively.
\end{lemma}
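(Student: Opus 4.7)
The plan is to prove both $\rho(B)\le \rho(M)$ and $\rho(B)\ge \rho(M)$ by exploiting a single clean algebraic identity, then reading off the result. The key object is the $n\times t$ \emph{characteristic matrix} $P$ of the partition, whose $j$th column is the $0/1$ indicator of the $j$th block of row indices. The equitability hypothesis (constant row sum $b_{ij}$ in each block $M_{ij}$) is precisely the statement that
\begin{equation*}
MP=PB.
\end{equation*}
Once this identity is in hand, everything else is short.

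For the first direction, I would take any eigenpair $(\lambda,v)$ of $B$ with $v\neq 0$ and observe
\begin{equation*}
M(Pv)=P(Bv)=\lambda (Pv).
\end{equation*}
Since the columns of $P$ have disjoint supports, they are linearly independent, so $Pv\neq 0$. Hence every eigenvalue of $B$ is an eigenvalue of $M$; specializing to $\lambda=\rho(B)$ yields $\rho(B)\le \rho(M)$.

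For the reverse direction I would use the nonnegativity of $M$. By the Perron--Frobenius theorem there is a nonnegative left eigenvector $x$ of $M$ with $x^{\mathsf T}M=\rho(M)\,x^{\mathsf T}$. Set $y^{\mathsf T}=x^{\mathsf T}P$ (the vector of block sums of $x$). Multiplying $MP=PB$ on the left by $x^{\mathsf T}$ gives
\begin{equation*}
y^{\mathsf T}B=x^{\mathsf T}PB=x^{\mathsf T}MP=\rho(M)\,x^{\mathsf T}P=\rho(M)\,y^{\mathsf T},
\end{equation*}
so provided $y\neq 0$, $\rho(M)$ is an eigenvalue of $B$ and therefore $\rho(M)\le \rho(B)$.

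The one delicate point, and the main obstacle, is ruling out $y=0$. If $M$ is irreducible then the Perron vector $x$ can be taken strictly positive and $y=P^{\mathsf T}x>0$ automatically. For a general nonnegative $M$ I would handle reducibility by a perturbation argument: replace $M$ by $M_\varepsilon=M+\varepsilon J$ (with $J$ the all-ones matrix), which is positive hence irreducible and for which the partition remains equitable (since $J$ trivially has constant block row sums), conclude $\rho(B_\varepsilon)=\rho(M_\varepsilon)$, and then let $\varepsilon\to 0^+$, using continuity of the spectral radius. Combining the two inequalities yields $\rho(B)=\rho(M)$.
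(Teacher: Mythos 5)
The paper offers no proof of this lemma; it is imported verbatim from \cite{YYSX}, so there is nothing internal to compare against. Your argument is correct and is essentially the standard proof of this fact: the intertwining identity $MP=PB$ encodes equitability, full column rank of $P$ gives $\sigma(B)\subseteq\sigma(M)$ and hence $\rho(B)\le\rho(M)$, and pushing a nonnegative left Perron eigenvector $x$ of $M$ forward to $y^{\mathsf T}=x^{\mathsf T}P$ gives the reverse inequality. The one point worth flagging is that the step you label as delicate is not: since the blocks partition the index set we have $P\mathbf{1}_t=\mathbf{1}_n$, so the coordinate sum of $y$ equals $\sum_i x_i>0$ for any nonnegative nonzero $x$, and $y\neq 0$ holds automatically without any irreducibility assumption; the perturbation detour through $M+\varepsilon J$ is valid but unnecessary.
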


Finally, we give some known results of the change of distance spectral radius caused by graph transformation.

\begin{lemma}(\cite{G})\label{le:8} \ Let $e$ be an edge of $G$ such that $G-e$ is connected. Then
	$\mu(G)<\mu(G-e)$.
\end{lemma}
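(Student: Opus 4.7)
The plan is to exploit the fact that deleting a single edge can only weakly increase pairwise distances, and then to upgrade this entrywise matrix inequality into a strict inequality of spectral radii via the Rayleigh quotient. First I would verify the pointwise comparison $D(G-e)\ge D(G)$: for any two vertices $u,v\in V(G)$, every shortest path from $u$ to $v$ in $G-e$ is also a walk from $u$ to $v$ in $G$, so $d_{G-e}(u,v)\ge d_G(u,v)$, and both distances are finite since $G-e$ is connected by hypothesis.

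Next I would locate where a strict inequality appears. Writing $e=xy$, we have $d_G(x,y)=1$, while in $G-e$ the edge $xy$ is no longer available, so $d_{G-e}(x,y)\ge 2$. Hence the $(x,y)$- and $(y,x)$-entries of the difference $D(G-e)-D(G)$ are strictly positive, while every other entry of this difference is nonnegative.

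To turn this entrywise comparison into the desired strict inequality $\mu(G)<\mu(G-e)$, I would invoke Perron--Frobenius. Since $G$ is connected, $D(G)$ is a nonnegative symmetric irreducible matrix (in fact its off-diagonal entries are all positive integers), so it admits a positive eigenvector $\mathbf{z}>0$ with $D(G)\mathbf{z}=\mu(G)\mathbf{z}$. Because $\mathbf{z}$ has all strictly positive components and $D(G-e)-D(G)$ is nonnegative with at least one strictly positive entry,
$$\mathbf{z}^{\top} D(G-e)\,\mathbf{z} \;>\; \mathbf{z}^{\top} D(G)\,\mathbf{z} \;=\; \mu(G)\,\mathbf{z}^{\top}\mathbf{z}.$$
Applying the Rayleigh principle to the symmetric matrix $D(G-e)$ then gives $\mu(G-e)\ge \mathbf{z}^{\top} D(G-e)\,\mathbf{z}/\mathbf{z}^{\top}\mathbf{z} > \mu(G)$, as required.

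No serious obstacle is anticipated: the whole argument is short and entirely standard. The only subtlety worth flagging is the use of the connectedness hypothesis on $G-e$, which is needed twice: explicitly to guarantee all entries of $D(G-e)$ are finite so that the entrywise comparison is meaningful, and implicitly to ensure that the positive Perron eigenvector $\mathbf{z}$ of $D(G)$ can be leveraged to convert the entrywise matrix inequality into a strict inequality of quadratic forms.
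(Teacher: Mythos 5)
Your proof is correct. Note that the paper itself offers no proof of this lemma at all --- it is quoted as a known result with a citation to Godsil's book --- so there is nothing to compare against; your argument (entrywise domination $D(G-e)\ge D(G)$ with strict inequality in the $(x,y)$-entry, combined with the positive Perron eigenvector of the irreducible matrix $D(G)$ and the Rayleigh quotient for the symmetric matrix $D(G-e)$) is the standard, complete way to establish it, and every step checks out.
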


\begin{lemma}(\cite{ZL})\label{le:3} \
	Let $n$, $c$, $s$ and $n_i (1\le i\le c)$ be positive integers with $n_1\ge n_2\ge \cdots \ge n_c\ge 1$ and $n_1+n_2+\cdots +n_c=n-s$. Then
	$$\mu(K_s \vee (K_{n_1}+K_{n_2}+\cdots +K_{n_c}))\ge\mu(K_s \vee (K_{n-s-(c-1)}+(c-1)K_1)),$$
with equality if and only if $(n_1,n_2,\cdots,n_c)=(n-s-(c-1),1,\cdots,1)$.
\end{lemma}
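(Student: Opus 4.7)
The plan is to prove Lemma 2.5 by reducing it to a single-step vertex transfer and then iterating. Concretely, I want to show that whenever $n_1 \ge n_2 \ge 2$, moving one vertex from $K_{n_2}$ into $K_{n_1}$ strictly decreases the distance spectral radius, i.e., $\mu(G) > \mu(G')$ where
$$G = K_s \vee (K_{n_1}+K_{n_2}+K_{n_3}+\cdots+K_{n_c}) \quad \text{and} \quad G' = K_s \vee (K_{n_1+1}+K_{n_2-1}+K_{n_3}+\cdots+K_{n_c}).$$
Every admissible partition with $n_2 \ge 2$ permits such a swap, and the only partition under the ordering $n_1 \ge n_2 \ge \cdots \ge n_c \ge 1$ (with fixed sum $n-s$) that admits none is $(n-s-(c-1),1,\ldots,1)$. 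Iterating therefore yields a strictly decreasing chain terminating at the extremal partition, which gives both the inequality and the equality characterization.

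For the single-step claim, I would apply the Rayleigh principle to the symmetric matrix $D(G)$ using the Perron eigenvector $x$ of $D(G')$. Because the automorphism group of $G'$ acts transitively on each block of its canonical partition $V(K_s),V(K_{n_1+1}),V(K_{n_2-1}),V(K_{n_3}),\ldots$, and the Perron vector of the irreducible nonnegative matrix $D(G')$ is unique up to scaling, $x$ takes constant values $z_0,z_1,z_2,z_3,\ldots$ on these blocks. Identifying $V(G)$ with $V(G')$ through the transferred vertex $u$, a direct check of shortest paths shows that $D(G)-D(G')$ is supported only on the row and column indexed by $u$, with entry $+1$ for each of the $n_1$ vertices of $V(K_{n_1})\subset V(K_{n_1+1})$ and entry $-1$ for each of the $n_2-1$ vertices of $V(K_{n_2-1})$. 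This yields the clean identity
$$x^T\bigl(D(G)-D(G')\bigr)x \;=\; 2z_1\bigl[\,n_1 z_1 - (n_2-1)z_2\,\bigr].$$

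The main obstacle is verifying that the bracket is strictly positive. Subtracting the two equations of the equitable quotient matrix of $G'$ (Definition 2.3 and Lemma 2.4) corresponding to the rows $V(K_{n_1+1})$ and $V(K_{n_2-1})$ gives the explicit ratio
$$\frac{z_1}{z_2} \;=\; \frac{\mu(G')+n_2}{\mu(G')+n_1+2},$$
after which positivity of the bracket reduces to the scalar inequality $(n_1-n_2+1)\,\mu(G') > 2n_2-n_1-2$. For this I would invoke the universal lower bound $\mu(G') \ge n-1$, which follows from $D(G') \ge J-I$ entrywise combined with Perron--Frobenius monotonicity; an elementary polynomial estimate using $n_1 \ge n_2 \ge 2$ then closes the inequality by showing the left side minus the right side is at least $n(n_1-n_2)+(n-n_2)+1 \ge 1$. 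Applying Rayleigh's principle with test vector $x$ then gives $\mu(G) \ge x^T D(G)x / x^T x > \mu(G')$, establishing the strict single-step decrease and hence, by iteration, the lemma.
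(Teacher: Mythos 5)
The paper does not actually prove this lemma: it is imported verbatim from \cite{ZL} (and its $p$-clique generalization, Lemma \ref{le:4}, from \cite{LLS}), so there is no in-paper proof to compare against. Your argument is a correct, self-contained proof, and it follows the standard route used for such ``clique consolidation'' lemmas in the cited literature: a local shift of one vertex from the second-largest clique into the largest one, analyzed through the Perron vector of the target graph. I checked the key computations. The perturbation $D(G)-D(G')$ is indeed supported on the row and column of the moved vertex $u$, with $+1$ against $V(K_{n_1})$ and $-1$ against $V(K_{n_2-1})$, giving $x^{T}(D(G)-D(G'))x=2z_1\bigl[n_1z_1-(n_2-1)z_2\bigr]$. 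Subtracting the two eigenvalue equations gives $(\mu(G')+n_1+2)z_1=(\mu(G')+n_2)z_2$, so positivity of the bracket is equivalent to $(n_1-n_2+1)\mu(G')>2n_2-n_1-2$, and with $\mu(G')\ge n-1$ the surplus is exactly $n(n_1-n_2)+(n-n_2)+1>0$, as you state. The iteration terminates because the largest part strictly increases at each swap, and the unique sorted partition admitting no swap is $(n-s-(c-1),1,\ldots,1)$, which also yields the equality characterization. Two minor points worth making explicit in a final write-up: constancy of the Perron vector on the blocks uses irreducibility of $D(G')$ (guaranteed since $s\ge1$ makes $G'$ connected of diameter at most $2$), and the strictness of $\mu(G)>\mu(G')$ comes directly from the strictly positive perturbation term in the Rayleigh quotient, not from the equality case of Rayleigh's principle. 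Neither is a gap.
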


\begin{lemma}(\cite{LLS})\label{le:4} \
	Let $n$, $c$, $s$, $p$ and $n_i (1\le i\le c)$ be positive integers with $n_1\ge2p$, $n_1\ge n_2\ge \cdots \ge n_c\ge p$ and $n_1+n_2+\cdots +n_c=n-s.$ Then
	$$\mu(K_s \vee (K_{n_1}+K_{n_2}+\cdots + K_{n_c}))\ge\mu(K_s \vee (K_{n-s-p(c-1)}+(c-1)K_p)),$$ with equality if and only if $(n_1,n_2,\cdots,n_c)=(n-s-p(c-1),p,\cdots,p)$.
\end{lemma}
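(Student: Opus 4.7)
The plan is to establish Lemma~\ref{le:4} by induction on how far the tuple $(n_1,\dots,n_c)$ is from $(n-s-p(c-1),p,\dots,p)$. The atomic step is the following: if an admissible tuple (i.e.\ $n_1\ge 2p$ and $n_i\ge p$ for all $i$) is not extremal, then some $n_j$ with $j\ge 2$ satisfies $n_j\ge p+1$, and I claim that the local move $(n_1,\dots,n_j,\dots,n_c)\mapsto (n_1+1,\dots,n_j-1,\dots,n_c)$ strictly decreases the distance spectral radius. The new tuple is still admissible because $n_1+1\ge 2p$ and $n_j-1\ge p$, and the potential $\sum_{i\ge 2}(n_i-p)$ drops by one each time, so iteration terminates at the extremal configuration, yielding both the inequality and the equality characterization.

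To prove the strict decrease, let $G_1$ and $G_2$ denote the graphs before and after the move, and let $y>0$ be the normalized Perron eigenvector of $D(G_2)$, so that $\mu(G_2)=y^{\top}D(G_2)y$ and, by the Rayleigh principle, $\mu(G_1)\ge y^{\top}D(G_1)y$. It therefore suffices to prove $y^{\top}\bigl(D(G_1)-D(G_2)\bigr)y>0$. Since both graphs have diameter $2$, the only nonzero entries of $D(G_1)-D(G_2)$ lie in the row and column of the moved vertex $v$: the entry is $+1$ against each of the $n_1$ original vertices of $K_{n_1}$ and $-1$ against each of the $n_j-1$ remaining vertices of $K_{n_j-1}$. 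Since $y$ is constant on every clique-class of $G_2$, the Rayleigh difference collapses to $2y_1\bigl(n_1y_1-(n_j-1)y_j\bigr)$, where $y_1$ and $y_j$ denote the common values of $y$ on $K_{n_1+1}$ and $K_{n_j-1}$ respectively.

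Writing the eigenvalue equations at a representative vertex of $K_{n_1+1}$ and of $K_{n_j-1}$ and subtracting, the contributions from $K_s$ and from the untouched cliques cancel, leaving after a short simplification
\[
(\mu_2+n_1+2)\, y_1 \;=\; (\mu_2+n_j)\, y_j,
\]
where $\mu_2$ denotes $\mu(G_2)$. Substituting this into the target $n_1 y_1>(n_j-1)y_j$ reduces the problem to the scalar inequality $(n_1-n_j+1)\mu_2>2n_j-n_1-2$. Since the right-hand side, divided by the coefficient of $\mu_2$, is increasing in $n_j$ on $\{p+1,\dots,n_1\}$, it suffices to treat the worst case $n_j=n_1$, namely to verify $\mu_2>n_1-2$.

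The main obstacle is this final estimate, and the plan is to close it using the textbook bound $\mu_2\ge \min_v R_{G_2}(v)$ for the Perron eigenvalue of an irreducible nonnegative matrix. A direct count shows that the minimum transmission in $G_2$ is attained on the enlarged clique $K_{n_1+1}$ and equals $2n-n_1-s-2$. Admissibility of $G_2$ forces $n_1+1\le n-s-(c-1)p$, so $n_1\le n-s-(c-1)p-1<n-s/2$, and therefore $2n-n_1-s-2>n_1-2$. This gives $\mu_2>n_1-2$ and completes the induction step, finishing the proof.
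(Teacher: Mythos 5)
Your argument is correct, but note that the paper itself offers no proof of this statement: Lemma~\ref{le:4} is imported verbatim from Lou, Liu and Shu \cite{LLS}, so there is no in-paper argument to compare against. What you have written is a clean self-contained proof by the standard vertex-shifting technique for distance spectral radii of split-like graphs (the same machinery behind Lemma~\ref{le:3} from \cite{ZL}): shift one vertex from a small clique into the largest one, bound $\mu(G_1)-\mu(G_2)$ from below by the Rayleigh quotient of the Perron vector $y$ of $D(G_2)$, observe that $D(G_1)-D(G_2)$ is supported on the moved vertex's row and column with entries $+1$ against $K_{n_1}$ and $-1$ against $K_{n_j-1}$, and reduce the sign question to $n_1y_1>(n_j-1)y_j$ via the eigenequations on the equitable partition. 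I checked the key identities: subtracting the eigenequations at representatives of $K_{n_1+1}$ and $K_{n_j-1}$ does give $(\mu_2+n_1+2)y_1=(\mu_2+n_j)y_j$; the resulting scalar condition $(n_1-n_j+1)\mu_2>2n_j-n_1-2$ has its worst case at $n_j=n_1$ because $\frac{d}{dn_j}\frac{2n_j-n_1-2}{n_1-n_j+1}=\frac{n_1}{(n_1-n_j+1)^2}>0$; and the minimum transmission of $G_2$ is indeed $2n-n_1-s-2$, attained on $K_{n_1+1}$, which exceeds $n_1-2$ since $n_1\le n-s-(c-1)p-1$. The monovariant $\sum_{i\ge2}(n_i-p)$ terminates the iteration at the extremal tuple and yields the equality case. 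The only cosmetic remark is that your argument never actually needs the hypothesis $n_1\ge 2p$ beyond preserving admissibility, and you should say a word about why $y$ is constant on each clique class (uniqueness of the Perron vector plus the automorphisms of $G_2$), but neither point is a gap.
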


\section{Matching number and distance spectral radius of $t$-connected graphs}

\quad\quad First of all, we give a lemma which was proposed by Zhang \cite{Z}.

\begin{lemma}(\cite{Z})\label{le:3.1}
	 Let $G$ be a connected graph on $n$ vertices with connectivity $t(G)$ and matching number $\alpha(G)<\lfloor \frac{n}{2} \rfloor$ (implying that $n-2\alpha(G)\ge2$). Then $t(G)\le \alpha(G)$.
\end{lemma}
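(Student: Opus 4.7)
The plan is to use the Berge--Tutte formula (Lemma \ref{le:1}) to produce a vertex subset $S$ that simultaneously controls both the matching number and the connectivity. By Lemma \ref{le:1}, there exists some $S\subseteq V(G)$ attaining the maximum, so that
$$o(G-S)-|S|=n-2\alpha(G)\ge 2.$$
This single inequality will drive the whole argument.

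First I would rule out the degenerate case $S=\emptyset$. Since $G$ is connected, $o(G)\le 1$, so $o(G)-0\le 1<2$, a contradiction. Hence $|S|\ge 1$ and $o(G-S)\ge |S|+2\ge 3$, which means $G-S$ has at least three components and in particular is disconnected. Moreover $|S|\le n-2$, because if $|S|=n-1$ then $G-S$ would be a single vertex with $o(G-S)=1$, violating $o(G-S)\ge 3$. Thus $S$ is a genuine vertex cut of $G$, and therefore
$$t(G)\le |S|.$$

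The second step bounds $|S|$ by $\alpha(G)$ via a trivial vertex count: the number of components of $G-S$ is at most $|V(G-S)|=n-|S|$, so
$$o(G-S)\le n-|S|.$$
Combined with $o(G-S)=|S|+n-2\alpha(G)$ from the first display, this yields $|S|+n-2\alpha(G)\le n-|S|$, i.e. $|S|\le \alpha(G)$. Chaining this with the connectivity bound gives $t(G)\le |S|\le \alpha(G)$, which is the desired conclusion.

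There is essentially no main obstacle here: the proof is a direct unpacking of the Berge--Tutte identity together with two elementary counting observations (that any vertex cut has size at least $t(G)$, and that the number of odd components is at most $n-|S|$). The only point that requires a brief sanity check is ensuring that $S$ is a legitimate vertex cut rather than the empty set or the complement of a single vertex, and both are handled by the hypothesis $n-2\alpha(G)\ge 2$.
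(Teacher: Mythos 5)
Your proof is correct, and it is essentially the standard argument: the paper does not reproduce a proof (it cites Zhang~\cite{Z}), but immediately after the lemma it uses exactly your chain $t(G)\le |S|\le \alpha(G)$ for an $S$ attaining the Berge--Tutte maximum, so your derivation matches the intended reasoning. Both of your auxiliary checks (that $S\ne\emptyset$ and that $o(G-S)\le n-|S|$) are handled correctly.
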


Let $G$ be a $t$-connected graph on $n$ vertices with $\alpha(G)\le\frac{n-k}{2}$, where $2\le k \le n-2$ is an integer, and let $S\subseteq V(G)$ be a vertex subset such that
$\alpha(G)=\frac{1}{2}(n-(o(G-S)-|S|))$. Based on Lemma \ref{le:3.1}, we have $t\le |S|\le \alpha(G)\le\frac{n-k}{2}$. It is natural to consider the following question.

{\bf Question 3.2:} Can we find a condition in terms of distance spectral radius that makes the matching number of an $n$-vertex $t$-connected graph $G$ more than $\frac{n-k}{2}$, where $2\le k \le n-2$ is an integer? In addition, can we characterize the corresponding spectral extremal graphs?

Based on the question, we give the following theorem.

\noindent\begin{theorem}\label{T1.5}  \  Let $n$, $t$ and $k$ be three positive integers, where $2\le k \le n-2$, $1\le t\le \frac{n-k}{2}$ and $n\equiv k(mod\ 2)$. Let $G$ be a $t$-connected graph of order $n\ge9k+10t-11$, and let $\alpha(G)$ be the matching number of $G$. If $\mu(G)\le \mu(K_{t} \vee (K_{n+1-2t-k}+(t+k-1)K_1))$, then $\alpha(G)>\frac{n-k}{2}$ unless $G\cong K_{t} \vee (K_{n+1-2t-k}+(t+k-1)K_1)$. (see Figure 1)
\end{theorem}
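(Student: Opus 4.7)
The plan is to prove the contrapositive: assume $\alpha(G)\le\frac{n-k}{2}$, and derive that either $G\cong H_t:=K_t\vee(K_{n+1-2t-k}+(t+k-1)K_1)$, or $\mu(G)>\mu(H_t)$. By Lemma~\ref{le:1} there exists $S\subseteq V(G)$ with $o(G-S)-|S|=n-2\alpha(G)\ge k$. Writing $s=|S|$, Lemma~\ref{le:3.1} gives $s\ge t$. Let the components of $G-S$ have sizes $n_1\ge\cdots\ge n_{c+e}$, summing to $n-s$, where the first $c\ge s+k$ are odd and the last $e$ are even.

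Next I would pass to the saturated graph $G'=K_s\vee(K_{n_1}+K_{n_2}+\cdots+K_{n_{c+e}})$ by adding every missing edge inside $S$, inside each component of $G-S$, and between $S$ and $V(G)\setminus S$. Since $G\subseteq G'$ and every intermediate graph in the edge-addition sequence is connected, repeated application of Lemma~\ref{le:8} yields $\mu(G)\ge\mu(G')$, with equality iff $G=G'$. Lemma~\ref{le:3} then gives
$$\mu(G')\ge\mu\bigl(K_s\vee(K_{n-s-(c+e-1)}+(c+e-1)K_1)\bigr),$$
with equality iff the component sizes of $G-S$ are $(n-s-(c+e-1),1,\ldots,1)$.

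The heart of the argument is then to show that the two-parameter function
$f(s,p)=\mu\bigl(K_s\vee(K_{n-s-(p-1)}+(p-1)K_1)\bigr)$, restricted to the range $t\le s\le\frac{n-k}{2}$ and $p\ge s+k$, attains its minimum at $(s,p)=(t,t+k)$, with value $\mu(H_t)$. For this I would use Definition~\ref{D1} and Lemma~\ref{le:10}: the distance matrix of $K_s\vee(K_{n-s-(p-1)}+(p-1)K_1)$ admits a three-part equitable partition into the core $K_s$, the big clique, and the set of singletons. The resulting equitable quotient matrix $B(s,p)$ is an explicit $3\times3$ matrix in $(s,p,n,k)$, and $\rho(B(s,p))=f(s,p)$. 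I would then write out $\det(\lambda I-B(s,p))$ and establish two monotonicity facts: first, for each fixed $s$ the root $\rho(B(s,p))$ is strictly increasing in $p$ on $p\ge s+k$, so that the inequality chain collapses to $\mu(G')\ge\mu(H_s)$ with equality forcing $c+e=s+k$ (in particular $e=0$); and second, $\mu(H_s)=\rho(B(s,s+k))\ge\mu(H_t)$ for every $s\in[t,\frac{n-k}{2}]$, with equality iff $s=t$. Chaining these inequalities gives $\mu(G)\ge\mu(G')\ge\mu(H_s)\ge\mu(H_t)$; the hypothesis $\mu(G)\le\mu(H_t)$ then forces equality throughout, hence $s=t$, $c+e=t+k$, $e=0$, and $G=H_t$.

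The main obstacle will be the second monotonicity claim. As $s$ changes by one the graph $H_s$ changes in three places simultaneously ($K_s$ gains a vertex, the big clique loses two, a new singleton appears), so the cubic $\det(\lambda I-B(s,s+k))$ is perturbed in each coefficient and the sign of the shift in its largest root is not obvious. The technical work will consist of bounding this perturbation explicitly; the hypothesis $n\ge9k+10t-11$ should enter precisely there, ensuring that the dominant $n$-linear term in the discriminant dominates every lower-order correction so that the monotonicity sign is determined. The first monotonicity claim (in $p$) is a similar but easier perturbation argument, since only the singleton count changes there.
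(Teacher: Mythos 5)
Your proposal follows essentially the same route as the paper: reduce $G$ to a graph of the form $K_s \vee (K_{n+1-2s-k}+(s+k-1)K_1)$ via Lemma~\ref{le:8} and Lemma~\ref{le:3} (the paper first converts even components to odd ones by moving vertices into $S$ and merges surplus odd components, whereas you absorb both steps into a monotonicity-in-$p$ claim, which indeed follows from Lemma~\ref{le:8} since increasing the number of singletons deletes edges), and then compare $\mu(H_s)$ with $\mu(H_t)$ by writing the $3\times 3$ equitable quotient matrices, comparing characteristic polynomials on $[n-1,\infty)$, and using $n\ge 9k+10t-11$ to control the sign --- exactly the content of the paper's Lemma~\ref{le:20}. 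The plan is correct and the hypothesis enters where you predict; the only work left unexecuted is the explicit polynomial comparison, which the paper carries out by showing $f_s(x)-f_t(x)<0$ for $x\ge n-1$ via monotonicity of an auxiliary quadratic.
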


\begin{figure}[H]
	\begin{centering}
		\includegraphics[scale=0.8]{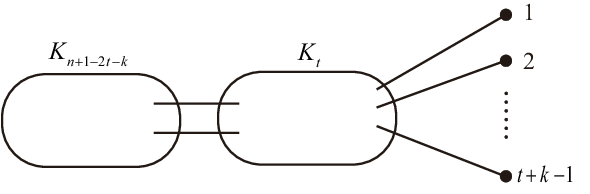}
		\caption{The extremal graph of Theorem 3.2.}\label{Fig.1.}
	\end{centering}
\end{figure}

Before we prove Theorem \ref{T1.5}, we will prove the following lemmas.

\noindent\begin{lemma}\label{le:9}  \  Let $n$, $t$ and $k$ be three positive integers, where $2\le k \le n-2$, $1\le t\le \frac{n-k}{2}$ and $n\equiv k(mod\ 2)$. Let $G$ be a $t$-connected graph of order $n$ with matching number $\alpha(G)$. If $\alpha(G)\le \frac{n-k}{2}$, then $\mu(G)\ge \mu(K_{s} \vee (K_{n+1-2s-k}+(s+k-1)K_1))$, where $t\le s\le\frac{n-k}{2}$. Equality holds if and only if $G\cong K_{s} \vee (K_{n+1-2s-k}+(s+k-1)K_1).$
\end{lemma}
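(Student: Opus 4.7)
The plan is to apply the Berge--Tutte formula (Lemma \ref{le:1}) to extract a bad separator $S$, then compare $G$ to a ``join of cliques'' graph via the monotonicity of $\mu$ under edge addition. Concretely, since $\alpha(G) \le (n-k)/2$, Lemma \ref{le:1} produces $S \subseteq V(G)$ with $o(G-S) - |S| \ge k$. Set $s = |S|$, and let $c$ denote the number of components of $G-S$ with sizes $n_1 \ge \cdots \ge n_c$ summing to $n-s$, so that $c \ge o(G-S) \ge s+k$. Since $k \ge 2$, $G-S$ has at least two components, so the $t$-connectivity of $G$ forces $s \ge t$; combined with $o(G-S) \le n-s$ this gives $t \le s \le (n-k)/2$.

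Next, form
$$G' = K_s \vee (K_{n_1} + K_{n_2} + \cdots + K_{n_c})$$
by completing $G[S]$, completing each component of $G-S$, and adding all edges between $S$ and $V(G) \setminus S$. Then $G$ is a connected spanning subgraph of $G'$, and successive applications of Lemma \ref{le:8} (each intermediate graph contains the connected graph $G$, so is connected) give $\mu(G) \ge \mu(G')$ with equality iff $G = G'$. By Lemma \ref{le:3} applied to $G'$,
$$\mu(G') \ge \mu(K_s \vee (K_{n-s-(c-1)} + (c-1)K_1)),$$
with equality iff $(n_1, \ldots, n_c) = (n-s-(c-1), 1, \ldots, 1)$.

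The final step is to show that $\mu(K_s \vee (K_{n-s-(c-1)} + (c-1)K_1))$ is strictly increasing in $c$. Going from $c-1$ to $c$ components amounts to peeling one vertex out of the big clique into a new isolated $K_1$, i.e., deleting the edges from that vertex to the remaining big-clique vertices; the graph stays connected because of the join with $K_s$, so each such edge deletion strictly increases $\mu$ by Lemma \ref{le:8}. Hence for $c \ge s+k$,
$$\mu(K_s \vee (K_{n-s-(c-1)} + (c-1)K_1)) \ge \mu(K_s \vee (K_{n+1-2s-k} + (s+k-1)K_1)),$$
with equality iff $c = s+k$. Chaining the three inequalities yields the claimed bound, and equality forces $G = G'$, $c = s+k$ and component sizes $(n+1-2s-k, 1, \ldots, 1)$, i.e.\ $G \cong K_s \vee (K_{n+1-2s-k} + (s+k-1)K_1)$. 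I expect the only mildly delicate part to be the monotonicity in $c$, but it reduces at once to Lemma \ref{le:8} once connectivity at every intermediate stage is noted.
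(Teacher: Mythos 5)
Your proof is correct and follows essentially the same route as the paper: extract $S$ from Lemma \ref{le:1}, complete $G$ to a join $K_s\vee(K_{n_1}+\cdots+K_{n_c})$ via Lemma \ref{le:8}, and concentrate the component sizes with Lemma \ref{le:3}. The only real difference is how the number of components is cut down to $s+k$: the paper merges the surplus odd components into the largest one \emph{before} invoking Lemma \ref{le:3} (which forces a parity check, using $n\equiv k \pmod 2$, that the merged clique is still odd), whereas you invoke Lemma \ref{le:3} first and then absorb the surplus isolated vertices by observing that $\mu(K_s\vee(K_{n-s-(c-1)}+(c-1)K_1))$ is strictly increasing in $c$ via repeated edge deletion --- a valid and slightly cleaner step that dispenses with the parity bookkeeping.
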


\begin{proof} Suppose that the distance spectral radius of $G$ is as small as possible among all $t$-connected graph on $n$ vertices with matching number $\alpha\le \frac{n-k}{2}$. By Lemma \ref{le:1}, there
exists a vertex subset $S\subseteq V(G)$ such that $\alpha(G)=\frac{1}{2}(n-(o(G-S)-|S|))$. Then, by Lemma \ref{le:8}, we can claim that all components of $G-S$ are odd components. Otherwise, we can randomly remove one vertex from each even component of $G-S$ to the set $S$ until all components of $G-S$ are odd components. In this process, it can be checked that the number of vertices in set $S$ and the number of odd components $G-S$ have the same increase. Therefore, the equality $o(G-S)-|S|=n-2\alpha(G)$ always holds.
	
Let $s=|S|$ and $q=o(G-S)$. Since $o(G-S)-|S|=n-2\alpha(G)\ge k$, we have $q\ge s+k$.  Then we will prove the following claims.

{\bf Claim 1.} Let $G_1=K_s \vee (K_{n_1}+K_{n_2}+\cdots+K_{n_q})$, where $n_1\ge n_2\ge \cdots \ge n_q$ are positive odd integers. Then $\alpha(G_1)\le \frac{n-k}{2}$ and $\mu(G)\ge \mu(G_1)$ with the equality holds if and only if $G\cong G_1$.

{\bf Proof.} Obviously, $G$ is a spanning subgraph of $G_1$. By Lemma \ref{le:8}, $\mu(G)\ge \mu(G_1)$,
where equality holds if and only if $G\cong G_1$. Note that $o(G_1-S)=o(G-S)\ge s + k$ and $n-2\alpha(G_1)=\max\{o(G_1-K)-|K|: \textrm{for all} \ K \subseteq V(G_1)\}\ge o(G_1-S)-|S|\ge k$, we get $\alpha(G_1)\le \frac{n-k}{2}$.

{\bf Claim 2.} Let $G_2=K_s \vee (K_{n'_1}+K_{n'_2}+\cdots+K_{n'_{s+k}})$, where $n'_1=n_1+\sum_{i=s+k+1}^{q}n_i$ and $n'_i=n_i$ for $i=2,\cdots,s+k$. Then $\alpha(G_2)\le \frac{n-k}{2}$ and $\mu(G_1)\ge \mu(G_2)$ with the equality holds if and only if $G_1\cong G_2$.

{\bf Proof.} For $i=1,2,\cdots,q$, Since $n_i$ is odd, we can take $n_i=2k_i+1$, where $k_i\ge0$ and $k_i$ is integer. Since $s+\sum_{i=1}^{q}n_i=s+q+\sum_{i=1}^{q}(2k_i)=n$, we have $q+s\equiv n\equiv k(mod\ 2)$. Thus $q-s-k=q+s-k-2s$ is even and $n'_1=n_1+\sum_{i=s+k+1}^{q}n_i$ is odd. Obviously, $o(G_2-S)=o(G_1-S)-(q-s-k)=s+k$ and $n-2\alpha(G_2)\ge o(G_2-S)-|S|$. Hence $\alpha(G_2)\le \frac{n-k}{2}$. Since $G_1$ is a spanning subgraph of $G_2$, by Lemma \ref{le:8}, $\mu(G_1)\ge \mu(G_2)$, where the equality holds if and only if $G_1\cong G_2$.	

{\bf Claim 3.} Let $G_3=K_s \vee (K_{n+1-2s-k}+(s+k-1)K_1)$. Then $\alpha(G_3)\le \frac{n-k}{2}$ and $\mu(G_2)\ge \mu(G_3)$ with the equality holds if and only if $G_2\cong G_3$.

{\bf Proof.}  Obviously, $o(G_3-S)=o(G_2-S)=s + k$ and $n-2\alpha(G_3)\ge o(G_3-S)-|S|$. Therefore $\alpha(G_3)\le \frac{n-k}{2}$. Moreover, by Lemma \ref{le:3}, $\mu(G_2)\ge \mu(G_3)$, where the equality holds if and only if $G_2\cong G_3$.	

Based on the above results, we can conclude that if $G$ is a $t$-connected graph of order $n$ with $\alpha(G)\le \frac{n-k}{2}$, then $\mu(G)\ge \mu(G_3)=\mu(K_s \vee (K_{n+1-2s-k}+(s+k-1)K_1))$ with the equality holds if and only if $G\cong K_s \vee (K_{n+1-2s-k}+(s+k-1)K_1)$. This completes the proof.
\end{proof}

\noindent\begin{lemma}\label{le:20}  \  Let $n\ge9k+10t-11$, $t$ and $k$ be three positive integers, where $2\le k \le n-2$, $1\le t\le \frac{n-k}{2}$ and $n\equiv k(mod\ 2)$. Then $\mu(K_s \vee (K_{n+1-2s-k}+(s+k-1)K_1))\ge \mu(K_{t} \vee (K_{n+1-2t-k}+(t+k-1)K_1))$, where $t\le s\le\frac{n-k}{2}$. Equality holds if and only if $K_s \vee (K_{n+1-2s-k}+(s+k-1)K_1)\cong K_{t} \vee (K_{n+1-2t-k}+(t+k-1)K_1).$
\end{lemma}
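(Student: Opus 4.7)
The strategy is to study $\mu(H(s))$ for $H(s):=K_s\vee(K_{n+1-2s-k}+(s+k-1)K_1)$ through the equitable quotient of its distance matrix. Since $H(s)$ has diameter $2$ and the distance between two vertices depends only on which of the three natural classes (central clique $V_1$ of size $s$, large clique $V_2$ of size $n+1-2s-k$, independent set $V_3$ of size $s+k-1$) they lie in, the distance matrix admits an equitable $3$-partition whose quotient is
\[
B(s)=\begin{pmatrix} s-1 & n+1-2s-k & s+k-1 \\ s & n-2s-k & 2(s+k-1) \\ s & 2(n+1-2s-k) & 2(s+k-2) \end{pmatrix},
\]
and Lemma~\ref{le:10} gives $\mu(H(s))=\rho(B(s))=:\rho_s$. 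Because $B(s+1)-B(s)$ is independent of $s$, every coefficient of the characteristic polynomial $f_s(x):=\det(xI-B(s))$ is polynomial in $s$, and one obtains a clean factorization
\[
f_s(x)-f_t(x)=(s-t)R(s,x),\qquad R(s,x)=-x^2+L(s)x+Q(s),
\]
where $L(s)=5(s+t)+7k-2n-8$ and $Q(s)=-2(s^2+st+t^2)+(n+8-3k)(s+t)+(nk-3n-k^2+9k-8)$. Using $f_t(\rho_t)=0$ gives $f_s(\rho_t)=(s-t)R(s,\rho_t)$, so proving $\rho_s\ge\rho_t$ for $s\ge t$ reduces to proving $R(s,\rho_t)\le 0$ throughout $[t,(n-k)/2]$.

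As a function of $s$, $R(s,\rho_t)$ is a concave quadratic (leading coefficient $-2$) with vertex $s^\ast=(5\rho_t-2t+n+8-3k)/4$; the cheap lower bound $\rho_t\ge n-1$ (the minimum row sum of $B(t)$) already gives $s^\ast>(n-k)/2$, so $R(s,\rho_t)$ is increasing on the interval and attains its maximum at $s=(n-k)/2$. A direct simplification yields $Q((n-k)/2)=n-2t^2-2kt+5k+8t-8$ and $L((n-k)/2)=(n+9k+10t-16)/2$; inserting $\rho_t\ge n-1$ and squaring, the inequality $R((n-k)/2,\rho_t)\le 0$ becomes equivalent to
\[
n^2-(9k+10t-11)n+(4t^2+4kt-6t-k+2)\ge 0.
\]
The constant term $4t^2+4kt-6t-k+2$ equals $3k$ at $t=1$ and is increasing in $t$ for $k\ge 2$, so it is strictly positive; a discriminant check then shows the larger root of this quadratic in $n$ is at most $9k+10t-11$, and the hypothesis $n\ge 9k+10t-11$ is exactly what closes the argument.

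To pass from $f_s(\rho_t)\le 0$ back to $\rho_s\ge\rho_t$, one uses that $B(s)$ has three real eigenvalues (they lie in the spectrum of the symmetric $D(H(s))$) with $\rho_s$ the largest; the trace identity $\operatorname{tr}(B(s))=s+n+k-5$ combined with $\rho_s\ge n-1$ bounds the sum of the two subdominant eigenvalues by $s+k-4$, comfortably below $n-1\le\rho_t$, so $f_s$ is negative on the interval $[\lambda_2,\rho_s]$ in which $\rho_t$ must lie. Chasing equality throughout forces $s=t$, establishing the extremal uniqueness. The main obstacle I anticipate is the final algebraic reduction: the threshold $n\ge 9k+10t-11$ matches the crude estimate $\rho_t\ge n-1$ with essentially no slack, so the row-sum bound must be used tightly and the subsequent arithmetic carried out carefully to avoid introducing extraneous conditions.
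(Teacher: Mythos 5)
Your proposal is correct and follows essentially the same route as the paper: the same equitable quotient matrix, the same factorization of $f_s(x)-f_t(x)$ with the factor $(s-t)$, and the same reduction (via monotonicity in $s$ and in $x$, merely carried out in the opposite order) to the inequality $n^2-(9k+10t-11)n+4t^2+4kt-6t-k+2\ge 0$, which the hypothesis $n\ge 9k+10t-11$ closes exactly as in the paper. The only blemish is your closing trace argument, since $\lambda_2+\lambda_3\le s+k-4$ does not by itself bound $\lambda_2$ without a lower bound on $\lambda_3$; but that detour is unnecessary, because $f_s$ is monic and hence positive beyond its largest root, so $f_s(\rho_t)<0$ already forces $\rho_t<\rho_s$.
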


\begin{proof} For convenience, let $G_s=K_s \vee (K_{n+1-2s-k}+(s+k-1)K_1)$ and $G_t=K_{t} \vee (K_{n+1-2t-k}+(t+k-1)K_1)$. Since $t \le s\le \frac{n-k}{2}$, then we will discuss the proof in two ways according to the value of $s$.
	
{\bf Case 1.} $s=t$.
	
Then $G_s\cong G_t$. Clearly, the result holds.
	
{\bf Case 2.} $t+1\le s\le \frac{n-k}{2}.$
	
We divide $V(G_s)$ into three parts: $V(K_s)$, $V(K_{n+1-2s-k})$ and $V((s+k-1)K_1)$. Then the distance matrix of $G_s$, denoted by $D(G_s)$, is
	\begin{equation*}
		\begin{bmatrix}
			(J-I)_{s\times s} & J_{s\times (n+1-2s-k)} & J_{s\times (s+k-1)} \\
			J_{(n+1-2s-k)\times s}& (J-I)_{(n+1-2s-k)\times (n+1-2s-k)} & 2J_{(n+1-2s-k)\times (s+k-1)}\\
			J_{(s+k-1)\times s}& {2J}_{(s+k-1)\times (n+1-2s-k)} & 2(J-I)_{(s+k-1)\times (s+k-1)}
		\end{bmatrix},
	\end{equation*}
where $J_{i\times j}$ denotes the ${i\times j}$ all-one matrix and $I_{i\times i}$ denotes the ${i\times i}$ identity square matrix. Then the equitable quotient matrix of the distance matrix $D(G_s)$, denoted by $M_s$, with respect to the partition $V(K_s)\cup V(K_{n+1-2s-k})\cup V((s+k-1)K_1)$ is
	\begin{equation*}
		M_s=\begin{bmatrix}
			s-1 & n+1-2s-k & s+k-1 \\
			s & n-2s-k & 2(s+k-1) \\
			s & 2(n+1-2s-k) & 2(s+k-2)
		\end{bmatrix}.
	\end{equation*}
Through a simple calculation, the characteristic polynomial of $M_s$ is
\begin{align*}
f_s(x)&=x^3+(-s-n-k+5)x^2+(5s^2+(-2n+7k-8)s-2kn-n+2k^2-5k+8)x\\
	& \ \ \ -2s^3+(n-3k+8)s^2+(kn-3n-k^2+9k-8)s-2kn+2k^2-4k+4.
\end{align*}	
We use $y_1(M_s)$ to denote the largest root of the equation $f_s(x)=0$. By Lemma \ref{le:10}, $\mu(G_s)=y_1(M_s)$. What's more, we can get the equitable quotient matrix $M_{t}$ of $G_t=K_{t} \vee (K_{n+1-2t-k}+(t+k-1)K_1)$ by replacing $s$ with $t$. Similarly, we can get the characteristic polynomial $f_{t}(x)$ of $M_{t}$ and $\mu(G_t)=y_1(M_{t})$ is the largest root of the equation $f_{t}(x)=0$. By direct calculation, we have
	\begin{align*}
f_s(x)-f_{t}(x)&=(t-s)[x^2+(2n+8-5(t+s)-7k)x+2s^2+(2t-n+3k-8)s\\
&\ \ \ \ +2t^2+(-n+3k-8)t+3n-kn+k^2-9k+8].
\end{align*}	
Obviously, $G_s$ and $G_t$ are both spanning subgraphs of $K_n$, by Lemma \ref{le:8}, $\mu(G_s)>\mu(K_{n})=n-1$ and $\mu(G_t)>\mu(K_{n})=n-1$. Then we will give the proof that $f_s(x)-f_{t}(x)<0$ for $x\in[n-1,+\infty)$. Sine, $t<s$, thus we only need to prove that $p(x)>0$ for $x\in[n-1,+\infty)$, where
	$$p(x)=x^2+(2n+8-5(t+s)-7k)x+2s^2(2t-n+3k-8)s+2t^2+(-n+3k-8)t+3n-kn+k^2-9k+8.$$
Since the symmetry axis of $p(x)$ is
	\begin{align*}
		\hat{x}&=\frac{5(t+s)+7k-2n-8}{2}\\
		&=\frac{5}{2}s+\frac{5}{2}t+\frac{7}{2}k-n-4\\
		&\le\frac{5}{4}(n-k)+\frac{5}{2}t+\frac{7}{2}k-n-4\\
		&=\frac{1}{4}n+\frac{5}{2}t+\frac{9}{4}k-4,
	\end{align*}
note that $n\ge9k+10t-11>3k+\frac{10}{3}t-4$ and $n>3k+\frac{10}{3}t-4\iff \frac{1}{4}n+\frac{5}{2}t+\frac{9}{4}k-4<n-1$, we get $\frac{5(t+s)+7k-2n-8}{2}<n-1$. Thus, $p(x)$ is increasing with respect to $x\in[n-1,+\infty)$, and
	$$p(x)\ge p(n-1)=2s^2+(3k+2t-6n-3)s+3n^2+2t^2+(3k-6n-3)t+7n-8kn+k^2-2k+1.$$
	Let$$v(s)\triangleq p(n-1)=2s^2+(3k+2t-6n-3)s+3n^2+2t^2+(3k-6n-3)t+7n-8kn+k^2-2k+1.$$
Recall that $t+1 \le s\le \frac{n-k}{2}$ and $n\ge9k+10t-11$, then
	\begin{align*}
		\frac{dv}{ds}&=4s+3k+2t-6n-3\\
		&\le2n-2k+3k+2t-6n-3\\
		&=-4n+k+2t-3<0.
	\end{align*}
Thus, $v(s)$ is decreasing with respect to $s\in[t+1,\frac{n-k}{2}]$. Furthermore,
\begin{align*}
	v(s)\ge v(\frac{n-k}{2})&=\frac{1}{2}[n^2-(10t+9k-11)n+4t^2+(4k-6)t-k+2]\\
	&>\frac{1}{2}[n^2-(10t+9k-11)n].
\end{align*}
Note that $n\ge10t+9k-11$, we get $v(s)>0$. Therefore, $p(x)\ge p(n-1)=v(s)>0$, which implies $f_s(x)<f_{t}(x)$ for $x\in[n-1,+\infty)$. In addition, by $\min\{\mu(G_s),\mu(G_t)\}>n-1$, we get that $\mu(G_s)>\mu(G_t)$. This completes the proof.
\end{proof}

By Lemma \ref{le:9} and Lemma \ref{le:20}, Theorem \ref{T1.5} clearly holds.

Let $k=2$ in Theorem \ref{T1.5}, we can get a condition in terms of distance spectral radius in a graph $G$ with order $n$ and connectivity $t$ such that $\alpha(G)>\frac{n}{2}-1$. In addition, the matching number of $G$ is no more then $\frac{n}{2}$. Hence, we have $\alpha(G)=\frac{n}{2}$. Then we can obtain the following corollary about the perfect matching based on the distance spectral radius.

\noindent\begin{corollary}\label{C1.3}  \ Let $n$ be an even integer. Suppose that $t$ is a positive integer, where $1\le t\le \frac{n-2}{2}$. Let $G$ be a graph of order $n\ge10t+7$ with connectivity $t$. If $\mu(G)\le \mu(K_{t} \vee (K_{n-2t-1}+(t+1)K_1))$, then $G$ contains a perfect matching unless $G\cong K_{t} \vee (K_{n-2t-1}+(t+1)K_1)$.
\end{corollary}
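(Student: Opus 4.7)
The plan is to deduce this corollary as a direct specialization of Theorem~\ref{T1.5} with $k=2$, and then convert the resulting matching-number inequality into the statement that a perfect matching exists.

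First I would check that the hypotheses of Theorem~\ref{T1.5} are satisfied when we set $k=2$. The parity requirement $n \equiv k \pmod{2}$ becomes $n$ even, which holds by assumption. The range requirement $2 \le k \le n-2$ reduces to $n \ge 4$, which is implied by $n \ge 10t+7 \ge 17$. The upper bound on $t$ becomes $1 \le t \le \frac{n-2}{2}$, matching the hypothesis. The order bound $n \ge 9k + 10t - 11$ specializes exactly to $n \ge 10t + 7$. Finally, a graph of connectivity $t$ is in particular $t$-connected, so $G$ falls in the class handled by Theorem~\ref{T1.5}, and the spectral bound $\mu(G) \le \mu(K_t \vee (K_{n-2t-1} + (t+1)K_1))$ is precisely the hypothesis of Theorem~\ref{T1.5} with $k=2$.

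Next I would invoke Theorem~\ref{T1.5} to conclude that either $G \cong K_t \vee (K_{n-2t-1} + (t+1)K_1)$, or else $\alpha(G) > \frac{n-k}{2} = \frac{n-2}{2} = \frac{n}{2} - 1$. In the latter case, since the matching number of any $n$-vertex graph satisfies $\alpha(G) \le \lfloor n/2 \rfloor = n/2$ (using that $n$ is even), the strict inequality $\alpha(G) > \frac{n}{2} - 1$ combined with the integrality of $\alpha(G)$ forces $\alpha(G) = \frac{n}{2}$. By the definition recalled in the introduction, $\alpha(G) = |V(G)|/2$ is equivalent to $G$ admitting a perfect matching, which gives the desired conclusion.

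There is essentially no obstacle beyond bookkeeping; the only thing that needs a moment's care is the parity step (using that $n$ is even makes $\frac{n-2}{2}$ an integer and thus turns the strict inequality $\alpha(G) > \frac{n-2}{2}$ into $\alpha(G) \ge \frac{n}{2}$). If one wanted a self-contained proof that avoids invoking Theorem~\ref{T1.5} as a black box, one could repeat the two-lemma structure of Section~3 with $k=2$: first show via Lemmas~\ref{le:1}, \ref{le:8}, and \ref{le:3} that a minimizer of $\mu$ among $t$-connected $n$-vertex graphs without a perfect matching has the form $K_s \vee (K_{n-2s-1} + (s+1)K_1)$ for some $t \le s \le \frac{n-2}{2}$, and then apply Lemma~\ref{le:20} with $k=2$ to push $s$ down to $t$. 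But since Theorem~\ref{T1.5} already does both steps in full generality, the direct specialization is the cleanest route.
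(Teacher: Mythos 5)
Your proposal is correct and follows exactly the paper's own route: the paper also obtains Corollary \ref{C1.3} by setting $k=2$ in Theorem \ref{T1.5}, noting that $n\ge 9k+10t-11$ becomes $n\ge 10t+7$, and then upgrading $\alpha(G)>\frac{n}{2}-1$ to $\alpha(G)=\frac{n}{2}$ via integrality and the trivial bound $\alpha(G)\le \frac{n}{2}$. Your verification of the specialized hypotheses and of the parity step is exactly the bookkeeping the paper leaves implicit.
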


\section{Odd $[1,b]$-factor and distance spectral radius of graph with given minimum degree}

\quad\quad Amahashi \cite{A} gave a sufficient and necessary condition for a graph contains an odd $[1,b]$-factor. Therefore, it is natural to consider the following question.

{\bf Question 4.1:} Can we obtain a distance spectral radius condition that makes a graph $G$ with given minimum degree $\delta$ having an odd $[1,b]$-factor? In addition, can we characterize the corresponding spectral extremal graphs?

Based on the question, we give the following theorem.

\noindent\begin{theorem}\label{T1.4}  \ Let $G$ be a connected graph of even order
 $n\ge\max\{ 2b\delta^2,(\frac{3}{b}+5+2b)\delta+1+\frac{3(b+1)}{b^2}\}$ with minimum degree $\delta\ge3$, where $b$ is a positive odd integer. If $\mu(G)\le \mu(K_{\delta} \vee (K_{n-(b+1)\delta-1}+(b\delta+1)K_1))$, then $G$ has an odd $[1,b]$-factor unless $G\cong K_{\delta} \vee (K_{n-(b+1)\delta-1}+(b\delta+1)K_1)$.(see Figure 2)
\end{theorem}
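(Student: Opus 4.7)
The plan is to mirror the proof of Theorem~\ref{T1.5} (Lemmas~\ref{le:9} and~\ref{le:20}), replacing the Tutte--Berge formula by Amahashi's criterion (Lemma~\ref{le:2}). Arguing by contradiction, suppose $G$ has no odd $[1,b]$-factor; Lemma~\ref{le:2} then supplies $S\subseteq V(G)$ with $o(G-S)>b|S|$. Writing $s=|S|$ and $q=o(G-S)$, the parity constraints forced by $n$ even and $b$ odd (namely $q\equiv s\pmod 2$ but $bs+1\not\equiv s\pmod 2$) strengthen this to $q\ge bs+2$, and the minimum-degree hypothesis $\delta(G)\ge\delta$ gives $s\ge\delta$ whenever $G-S$ has an isolated vertex, since every such vertex sends all of its neighbours into $S$. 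I write $G^{*}:=K_{\delta}\vee(K_{n-(b+1)\delta-1}+(b\delta+1)K_1)$ throughout.

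Next I reduce $G$ in three steps parallel to Claims 1--3 of Lemma~\ref{le:9}. First, complete $G[S]$ to $K_s$, every component of $G-S$ to a clique, and insert every edge between $S$ and $V(G)\setminus S$; the resulting spanning supergraph $H_1=K_s\vee(K_{n_1}+\cdots+K_{n_q}+K_{m_1}+\cdots+K_{m_r})$, with the $n_i$ the odd and the $m_j$ the even component orders, satisfies $\mu(G)\ge\mu(H_1)$ by repeated application of Lemma~\ref{le:8}. Second, merge the smallest $q-(bs+2)$ odd components together with every even component into the largest odd component; the number $q-bs-1$ of odd summands being absorbed (counting the original $K_{n_1}$) is itself odd by $q\equiv s\pmod 2$, so the new top component has odd order $n''_1$, and Lemma~\ref{le:8} gives $\mu(H_1)\ge\mu(H_2)$ for $H_2:=K_s\vee(K_{n''_1}+K_{n_2}+\cdots+K_{n_{bs+2}})$. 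Third, Lemma~\ref{le:3} applied with $c=bs+2$ yields $H_3:=K_s\vee(K_{n-(b+1)s-1}+(bs+1)K_1)$ with $\mu(H_2)\ge\mu(H_3)$.

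The final step, an analog of Lemma~\ref{le:20}, is to prove that $F(s):=\mu(K_s\vee(K_{n-(b+1)s-1}+(bs+1)K_1))\ge F(\delta)=\mu(G^{*})$ for all admissible $s\ge\delta$, with equality only at $s=\delta$. Using Lemma~\ref{le:10} I compute the $3\times 3$ equitable quotient matrix $M_s$ with respect to the partition $V(K_s)\cup V(K_{n-(b+1)s-1})\cup V((bs+1)K_1)$, whose entries are linear in $s$, and let $f_s(x)$ be its characteristic polynomial. Since $\mu(K_n)=n-1$, Lemma~\ref{le:8} gives $F(s),F(\delta)>n-1$; factoring $f_s(x)-f_\delta(x)=(s-\delta)\,q_{s,\delta}(x)$ with $q_{s,\delta}$ quadratic in $x$ reduces the task to showing $q_{s,\delta}(x)>0$ on $[n-1,\infty)$, which parallels the $v(s)$-estimate in the proof of Lemma~\ref{le:20}. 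The numerical hypotheses $n\ge 2b\delta^{2}$ and $n\ge(3/b+5+2b)\delta+1+3(b+1)/b^{2}$ are calibrated to guarantee both that the axis of symmetry of $q_{s,\delta}$ lies to the left of $n-1$ and that $q_{s,\delta}(n-1)>0$. Chaining everything yields $\mu(G)\ge F(s)\ge F(\delta)=\mu(G^{*})$, with equality throughout forcing $G\cong G^{*}$.

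The main obstacle I anticipate is the case $s<\delta$: then $G-S$ has no isolated vertex, every component has order at least $\delta-s+1$, and the reduction to singleton components via Lemma~\ref{le:3} is no longer applicable. Instead one must invoke Lemma~\ref{le:4} with $p=\delta-s+1$ (after first verifying $n''_1\ge 2p$) to reach $K_s\vee(K_{n-s-p(bs+1)}+(bs+1)K_p)$ and then perform a separate $3\times 3$ quotient-matrix comparison against $\mu(G^{*})$, uniformly in $1\le s\le\delta-1$. It is precisely this auxiliary comparison that demands the quadratic-in-$\delta$ lower bound $n\ge 2b\delta^{2}$ and cleanly separates the analysis into the two complementary regimes $s\ge\delta$ and $s<\delta$.
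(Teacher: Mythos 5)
Your proposal follows essentially the same route as the paper's proof: contradiction via Amahashi's criterion, the parity argument giving $o(G-S)\ge b|S|+2$, reduction to $K_s\vee(K_{n_1}+\cdots+K_{n_{bs+2}})$ by Lemma \ref{le:8}, then Lemma \ref{le:3} and a quotient-matrix comparison for $s\ge\delta$, and Lemma \ref{le:4} with $p=\delta+1-s$ (after checking $n_1\ge 2p$ via $n\ge 2b\delta^2$) followed by a separate quotient-matrix comparison for $s<\delta$. The case structure, the role of each numerical hypothesis, and the key lemmas all match the paper's argument.
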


\begin{figure}[H]
	\begin{centering}
		\includegraphics[scale=0.8]{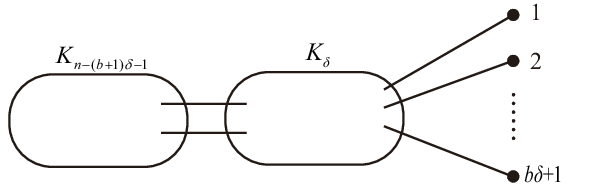}
		\caption{The extremal graph of Theorem 4.1.}\label{Fig.2.}
	\end{centering}
\end{figure}

\begin{proof} Let $G$ be a connected graph of even order
$n\ge\max\{ 2b\delta^2,(\frac{3}{b}+5+2b)\delta+1+\frac{3(b+1)}{b^2}\}$, where $\delta$ is the minimum degree of $G$ and $\delta\ge3$, $b$ is a positive odd integer. Suppose to the contrary that $G$ has no odd $[1,b]$-factor. Then by Lemma \ref{le:2}, there exists a vertex subset $S\subseteq V(G)$ such that $o(G-S)> b|S|$.
Let $|S|=s$ and $o(G-S)=q$. Since $n$ is even, it is easy to see that $s$ and $q$ have the same parity. Since $b$ is odd, we have that $bs$ and $q$ have the same parity. Thus $q=o(G-S)\ge bs+2$. Let $t=bs+2$. Since $s+t\le s+q\le n$, we have $s\le\frac{n-2}{b+1}$. Obviously, $G$ is a spanning subgraph of $K_s \vee (K_{n_1}+K_{n_2}+\cdots+ K_{n_t})$ for some positive odd integer $n_1\ge n_2\ge \cdots\ge n_t$ with $\sum_{i=1}^{t}n_i+s=n$. Let $G_1=K_s \vee (K_{n_1}+K_{n_2}+\cdots+ K_{n_t})$. By Lemma \ref{le:8},
\begin{equation}\label{eq2}
	\mu(G)\ge \mu(G_1),
\end{equation}
where equality holds if and only if $G\cong G_1$. Let $G_{\delta}=K_{\delta} \vee (K_{n-(b+1)\delta-1}+(b\delta+1)K_1)$. Let $S_1$ be the vertex set of $K_s$. Since there exists a vertex subset $S_1\subseteq V(G_1)$ such that $o(G_1-S_1)=t=bs+2>bs=b|S_1|$, we have that $G_1$ contains no odd $[1,b]$-factor. Recall that $s\le \frac{n-2}{b+1}$, then we will discuss the proof in three ways according to the value of $s$.

{\bf Case 1.} $s=\delta.$

In this case, we have $G_1=K_s \vee (K_{n_1}+K_{n_2}+\cdots+ K_{n_t})=K_{\delta} \vee (K_{n_1}+K_{n_2}+\cdots+ K_{n_{b\delta+2}})$.

By Lemma \ref{le:3},
\begin{equation}\label{e:1}
	\mu(G_{\delta})=\mu(K_{\delta} \vee (K_{n-(b+1)\delta-1}+(b\delta+1)K_1))\le \mu(G_1),
\end{equation}
with equality if and only if $G_{\delta}\cong G_1$.

Furthermore, combining with (\ref{eq2}) and (\ref{e:1}),
$$\mu(G)\ge\mu(G_{\delta})=\mu(K_{\delta} \vee (K_{n-(b+1)\delta-1}+(b\delta+1)K_1)),$$
with equality if and only if $G_{\delta}\cong G$.

Moreover, according to the assumed condition $\mu(G)\le \mu(K_{\delta} \vee (K_{n-(b+1)\delta-1}+(b\delta+1)K_1))$, we have $\mu(G)= \mu(K_{\delta} \vee (K_{n-(b+1)\delta-1}+(b\delta+1)K_1))$.

Based on the above results, we conclude that $G\cong K_{\delta} \vee (K_{n-(b+1)\delta-1}+(b\delta+1)K_1))$. In addition, take $S=V(K_{\delta})$, then $o(G-S)=b\delta+2=bs+2>bs=b|S|$, which implies $G$ does not have an odd $[1,b]$-factor.

{\bf Case 2.} $\delta+1\le s\le \frac{n-2}{b+1}.$

Let $G_s=K_{s} \vee (K_{n-(b+1)s-1}+(bs+1)K_1)$. By Lemma \ref{le:3},
\begin{equation}\label{e:2}
	\mu(G_{s})=\mu(K_{s} \vee (K_{n-(b+1)s-1}+(bs+1)K_1))\le \mu(G_1),
\end{equation}
with equality if and only if $G_s\cong G_1$. For the graph $G_s$, let $D(G_s)$ denote the distance matrix of $G_s$. Then the equitable quotient matrix of $D(G_s)$, denoted by $M_s$, with respect to the partition $V(K_s)\cup V(K_{n-(b+1)s-1})\cup V((bs+1)K_1)$ is
\begin{equation*}
	M_s=\begin{bmatrix}
		s-1 & n-(b+1)s-1 & bs+1 \\
		s & n-(b+1)s-2 & 2(bs+1) \\
		s & 2(n-(b+1)s-1) & 2bs
	\end{bmatrix},
\end{equation*}
and the characteristic polynomial of $M_s$ is
\begin{align*}
	f_s(x)&=x^3+(-bs-n+3)x^2+(2b^2s^2+3bs^2-2bns+3bs+3s-5n+6)x-(b^2+b)s^3\\
	&\ \ \ \ +(bn+2b^2+b-1)s^2+(n-2bn+4b+2)s+4-4n.
\end{align*}
We use $y_1(M_s)$ to denote the largest root of the equation $f_s(x)=0$. By Lemma \ref{le:10}, we can get $\mu(G_s)=y_1(M_s)$. Moreover, we obtain the equitable quotient matrix $M_{\delta}$ of $G_{\delta}=K_{\delta} \vee (K_{n-(b+1)\delta-1}+(b\delta+1)K_1)$ by replacing $s$ with $\delta$. Similarly, we can get the characteristic polynomial $f_{\delta}(x)$ of $M_{\delta}$ and $\mu(G_\delta)=y_1(M_{\delta})$ is the largest root of the equation $f_{\delta}(x)=0$. By a calculation, we have
\begin{align*}
	f_s(x)-f_{\delta}(x)&=(\delta-s)[bx^2+(-2b^2s-3bs+2bn-2b^2\delta-3b\delta-3b-3)x\\
	&\ \ \ +(b^2s+bs-bn+b^2\delta+b\delta-2b^2-b+1)s\\
	&\ \ \ +(-bn+b^2\delta+b\delta-2b^2-b+1)\delta-n+2bn-4b-2].
\end{align*}	
Since $G_s$ and $G_{\delta}$ are spanning subgraphs of $K_n$, by lemma \ref{le:8}, $\mu(G_s)>\mu(K_{n})=n-1$ and $\mu(G_{\delta})>\mu(K_{n})=n-1$.
Then we will prove that $f_s(x)-f_{\delta}(x)<0$ for $x\in[n-1,+\infty)$. Since $\delta<s$, we only need to prove $c(x)>0$, where
\begin{align*}
	c(x)&=bx^2+(-2b^2s-3bs+2bn-2b^2\delta-3b\delta-3b-3)x\\
	&\ \ \ +(b^2s+bs-bn+b^2\delta+b\delta-2b^2-b+1)s\\
	&\ \ \ +(-bn+b^2\delta+b\delta-2b^2-b+1)\delta-n+2bn-4b-2.
\end{align*}	
By direct calculation, the symmetry axis of $c(x)$ is
\begin{align*}
	\hat{x}&=-\frac{(-2b^2s-3bs+2bn-2b^2\delta-3b\delta-3b-3)}{2b}\\
	&=\frac{3}{2b}+\frac{3}{2}+(\frac{3}{2}+b)\delta+\frac{3}{2}s+bs-n\\
	&<3+(\frac{3}{2}+b)\delta+(\frac{3}{2}+b)s-n\\
	&\le3+(\frac{3}{2}+b)\delta+(\frac{3}{2}+b)\frac{n-2}{b+1}-n\\
	&=1+(\frac{3}{2}+b)\delta+\frac{1}{2(b+1)}n-\frac{1}{b+1}.
\end{align*}
Since $\delta\ge3$ and $n\ge2b\delta^2$, we have $n\ge2b\delta^2>2+\frac{2}{3}(3+2b)\delta>2+\frac{b+1}{2b+1}(3+2b)\delta$. Note that
$n>2+\frac{b+1}{2b+1}(3+2b)\delta\iff 1+(\frac{3}{2}+b)\delta+\frac{1}{2(b+1)}n-\frac{1}{b+1}<n-1$, we get $-\frac{(-2b^2s-3bs+2bn-2b^2\delta-3b\delta-3b-3)}{2b}<n-1$, which implies $c(x)$ is increasing with respect to $x\in[n-1,+\infty)$. Hence,
\begin{align*}
	c(x)\ge c(n-1)&=(b^2+b)s^2+(1+2b+b\delta+b^2\delta-4bn-2b^2n)s+3bn^2\\
	& \ \ \ +b(b+1)\delta^2+(1+2b-4bn-2b^2n)\delta-4n-5bn+1.
\end{align*}
Let
\begin{align*}
	h(s)\triangleq c(n-1)&=(b^2+b)s^2+(1+2b+b\delta+b^2\delta-4bn-2b^2n)s+3bn^2\\
	& \ \ \ +b(b+1)\delta^2+(1+2b-4bn-2b^2n)\delta-4n-5bn+1.
\end{align*}
Recall that $\delta<s\le \frac{n-2}{b+1}$ and $n\ge2b\delta^2$, we obtain
\begin{align*}
	\frac{dh}{ds}&=2b(b+1)s+1+2b+b\delta+b^2\delta-4bn-2b^2n\\
	&\le2b(n-2)+1+2b+b\delta+b^2\delta-4bn-2b^2n\\
	&=-2bn+1-2b+b\delta+b^2\delta-2b^2n<0.
\end{align*}
Thus, $h(s)$ is decreasing with respect to $s\in[\delta+1,\frac{n-2}{b+1}]$. By direct calculation,
\begin{align*}
	h(s)\ge h(\frac{n-2}{b+1})&=\frac{1}{b+1}[b^2n^2-(3+3b+b^2+(3b+5b^2+2b^3)\delta)n\\
	& \ \ \ +b^3\delta^2+2\delta^2b^2+b\delta^2+b^2\delta+(b+1)\delta+b-1]\\
	& \ \ \ >\frac{1}{b+1}[b^2n^2-(3+3b+b^2+(3b+5b^2+2b^3)\delta)n]\\
	& \ \ \ =\frac{b^2}{b+1}[n^2-((\dfrac{3}{b}+5+2b)\delta+1+\frac{3(b+1)}{b^2})n].
\end{align*}
Note that $n\ge\max\{ 2b\delta^2,(\dfrac{3}{b}+5+2b)\delta+1+\frac{3(b+1)}{b^2}\}\ge(\dfrac{3}{b}+5+2b)\delta+1+\frac{3(b+1)}{b^2}$. Hence $c(x)\ge c(n-1)=h(s)>0$, which implies $f_s(x)<f_{\delta}(x)$ for $x\in[n-1,+\infty)$. Recall that $\min\{\mu(G_s),\mu(G_{\delta})\}>n-1$, we have $\mu(G_s)>\mu(G_{\delta})$. Furthermore, combining with (\ref{eq2}) and (\ref{e:2}), we get
$$\mu(G)\ge\mu(G_1)\ge\mu(G_s)>\mu(G_{\delta})=\mu(K_{\delta} \vee (K_{n-(b+1)\delta-1}+(b\delta+1)K_1),$$
a contradiction.

{\bf Case 3.} $1\le s<\delta$.

Since $G$ is a spanning subgraph of $G_1=K_s \vee (K_{n_1}+K_{n_2}+\cdots+ K_{n_t})$, where $n_1\ge n_2\ge \cdots \ge n_t$ is odd integer, $t=bs+2$ and $n_1+n_2+\cdots+n_t=n-s$. It is easy to see that $\delta(G_1)\ge \delta(G)=\delta$, we have $n_t-1+s\ge \delta$. Thus, $n_1\ge n_2\ge \cdots \ge n_t\ge \delta-s+1$. Then we will proof that $n_1\ge 2(\delta-s+1)$. If $n_1< 2(\delta-s+1)$, then $n_1\le 2\delta-2s+1$. Since $n_1\ge n_2\ge \cdots \ge n_t$ and $1\le s<\delta$ and $\delta\ge3$, we obtain
\begin{align*}
	n&=s+n_1+n_2+\cdots+n_t\\
	&\le s+(bs+2)(2\delta-2s+1)\\
	&=-2bs^2+(-3+b+2b\delta)s+4\delta+2\\
	&\le -2b(\frac{\delta}{2}+\frac{1}{4}-\frac{3}{4b})^2+(-3+b+2b\delta)(\frac{\delta}{2}+\frac{1}{4}-\frac{3}{4b})+4\delta+2\\
	&=\frac{b\delta^2}{2}+\frac{b+5}{2}\delta+\frac{b}{8}+\frac{9}{8b}+\frac{5}{4}\\
	&<\frac{b\delta^2}{2}+\frac{b+5}{2}\delta+\frac{b}{8}+3.
\end{align*}
Let $$l(b)=2b\delta^2-(\frac{b\delta^2}{2}+\frac{b+5}{2}\delta+\frac{b}{8}+3)=\frac{3}{2}b\delta^2-(\frac{b+5}{2}\delta+\frac{b}{8}+3).$$
Note that $l'(b)=\frac{3}{2}\delta^2-\frac{1}{2}\delta-\frac{1}{8}>0$, we have $l(b)\ge l(1)=\frac{3}{2}\delta^2-3\delta-\frac{25}{8}>0$. Thus $n<2b\delta^2$.
This is a contradiction with $n\ge2b\delta^2$. Hence, $n_1\ge 2(\delta-s+1)$. Let $G_s=K_s\vee (K_{n-s-(\delta+1-s)(bs+1)}+(bs+1)K_{\delta+1-s})$. By Lemma \ref{le:4},
\begin{equation}\label{e:3}
	\mu(G_1)\ge \mu(G_s),
\end{equation}
where equality holds if and only if $G_1\cong G_s$. In what follows, we will discuss three subcases by classifying the value of $s$.

{\bf Case 3.1.} $s=1$.

In this case, $G_s=K_1\vee (K_{n-1-\delta(b+1)}+(b+1)K_{\delta})$, and the equitable quotient matrix of its distance matrix is
\begin{equation*}
	M_1=\begin{bmatrix}
		0 & n-(b+1)\delta-1 & (b+1)\delta \\
		1 & n-(b+1)\delta-2 & 2(b+1)\delta \\
		1 & 2(n-(b+1)\delta-1) & 2b\delta+\delta-1
	\end{bmatrix}.
\end{equation*}
By a simple calculation, the characteristic polynomial of $M_1$ is
\begin{align*}
	f_1(x)&=x^3+(3-b\delta-n)x^2+(3+3\delta+b\delta+3\delta^2+5b\delta^2+2b^2\delta^2-2n-3\delta n-2b\delta n)x\\
	&\ \ \ \ +(b^2+3b+2)\delta^2
	+(-bn-2n+b+2)\delta-n+1.
\end{align*}
Recall that in Case 2, by replacing $s$ with $\delta$, we can get the equitable quotient matrix $M_{\delta}$ of $G_{\delta}=K_{\delta} \vee (K_{n-(b+1)\delta-1}+(b\delta+1)K_1)$. Thus, the characteristic polynomial of $M_{\delta}$ is
\begin{align*} f_\delta(x)&=x^3+(-b\delta-n+3)x^2+(2b^2\delta^2+3b\delta^2-2bn\delta+3b\delta+3\delta-5n+6)x-(b^2+b)\delta^3\\
	&\ \ \ \ +(bn+2b^2+b-1)\delta^2+(n-2bn+4b+2)\delta+4-4n.
\end{align*}
Since $\delta\ge3$ and $n\ge 2b\delta^2$, for $x\in[n-1,+\infty)$, we have
\begin{align*}
	f_{\delta}(x)-f_1(x)&=[3n(\delta-1)-2b\delta^2-3\delta^2+2b\delta+3]x\\
	&\ \ \  -(b^2+b)\delta^3+(bn+b^2-2b-3)\delta^2+(-bn+3n+3b)\delta-3n+3\\
	&\ \ \ \ge[3n(\delta-1)-2b\delta^2-3\delta^2+2b\delta+3](n-1)\\
	&\ \ \ \ \  -(b^2+b)\delta^3+(bn+b^2-2b-3)\delta^2+(-bn+3n+3b)\delta-3n+3\\
	&\ \ \ =(\delta-1)[3n^2-(b\delta+3\delta+3)n-b^2\delta^2-b\delta^2-b\delta]\\
	&\ \ \  \triangleq (\delta-1)m(n).
\end{align*}	
Observe that $G_{\delta}$ and $G_s$ are spanning subgraphs of $K_n$, by lemma \ref{le:8}, $\mu(G_{\delta})>\mu(K_{n})=n-1$ and $\mu(G_s)>\mu(K_{n})=n-1$.
Then we will prove that $f_{\delta}(x)-f_1(x)>0$ for $x\in[n-1,+\infty)$. Since $\delta\ge3$, we only need to prove $m(n)>0$.

The symmetry axis of $m(n)$ is
\begin{align*}
	\hat{n}&=\frac{b\delta+3\delta+3}{6}<2b\delta^2.
\end{align*}
Thus, $m(n)$ is increasing with respect to $n\in[2b\delta^2,+\infty)$. By a simple calculation, we have
$$m(n)\ge m(2b\delta^2)=b\delta[12b\delta^3-(2b+6)\delta^2-(7+b)\delta-1]\triangleq b\delta h(b).$$
It is easy to see that $h(b)$ is increasing with respect to $b\in[1,+\infty)$. Thus $h(b)\ge h(1)=\delta(12\delta^2-8\delta-8)-1>0$ and $m(n)\ge m(2b\delta^2)=b\delta h(b)>0$, which implies $f_{\delta}(x)>f_1(x)$ for $x\in[n-1,+\infty)$. Note that $\min\{\mu(G_{\delta}),\mu(G_s)\}>n-1$, we have $\mu(G_s)>\mu(G_{\delta})$. Furthermore, by (\ref{eq2}) and (\ref{e:3}),
$$\mu(G)\ge\mu(G_1)\ge\mu(G_s)>\mu(G_{\delta})=\mu(K_{\delta} \vee (K_{n-(b+1)\delta-1}+(b\delta+1)K_1),$$
a contradiction.

{\bf Case 3.2.} $2\le s\le \delta-1$.

Note that $G_s=K_s\vee (K_{n-s-(\delta+1-s)(bs+1)}+(bs+1)K_{\delta+1-s})$. The distance matrix $D(G_s)$ of $G_s$ is
\begin{equation*}
	\bordermatrix{%
	&K_s &K_{n-s-(\delta+1-s)(bs+1)} &K_{\delta+1-s} &\cdots &K_{\delta+1-s}\cr
K_s &J-I &J &J &\cdots &J\cr
K_{n-s-(\delta+1-s)(bs+1)} &J &J-I &2J &\cdots &2J\cr
K_{\delta+1-s} &J &2J &J-I &\cdots &2J\cr
\vdots &\vdots &\vdots &\vdots &\vdots &\vdots\cr
K_{\delta+1-s} &J &2J &2J &\cdots &J-I\cr
	},
\end{equation*}
where $J$ denotes the all-one matrix and $I$ denotes the identity square matrix. Then we use $P_s$ to denote the equitable quotient matrix of the distance matrix $D(G_s)$ for the partition $V(K_s)\cup V(K_{n-s-(\delta+1-s)(bs+1)})\cup V((bs+1)K_{\delta+1-s})$. Thus
\begin{equation*}
	P_s=\begin{bmatrix}
		s-1 & n-s-(\delta+1-s)(bs+1) & (\delta+1-s)(bs+1) \\
		s & n-s-(\delta+1-s)(bs+1)-1 & 2(\delta+1-s)(bs+1) \\
		s & 2(n-s-(\delta+1-s)(bs+1)) & 2bs(\delta+1-s)+(\delta-s)
	\end{bmatrix}.
\end{equation*}
and the characteristic polynomial of $P_s$ is
\begin{align*}
	f_s(x)&=x^3+(3-n-bs-b\delta s+bs^2)x^2\\
	&\ \ \ \ +[2 b^2 s^4+(2b-4b^2-4b^2\delta)s^3+(-5 b + 2 b^2 - 7 b\delta + 4 b^2\delta + 2 b^2 \delta^2 + 2 b n) s^2\\
	&\ \ \ \ +(-3 + 3 b - 3\delta + 8 b\delta + 5 b\delta^2 + 3 n - 2 b n - 2 b\delta n)s-3\delta n-5n+3\delta^2+6\delta+6]x \\
	&\ \ \ \ -b^2 s^5+(-b+4b^2+2b^2\delta)s^4+(5b-5b^2+3b\delta-6b^2\delta-b^2 \delta^2 -b n)s^3\\
	&\ \ \ \  +(1-8b+2b^2+\delta-11b\delta +4 b^2\delta-2b \delta^2+2b^2\delta^2-n+ 3bn+b\delta n)s^2\\
	&\ \ \ \ +(-4 + 4 b - 5\delta + 9 b\delta - \delta^2 + 5 b \delta^2 + 4 n - 2 b n + \delta n -
	2 b \delta n) s\\
	&\ \ \ \ -3\delta n-4n+3\delta^2+6\delta+4.
\end{align*}
We use $y_1(M_s)$ to denote the largest real root of the equation $f_s(x)=0$. By Lemma \ref{le:10}, we have that $\mu(G_s)=y_1(M_s)$. Recall that in  Case 3.1, we can get the equitable quotient matrix $M_{\delta}$ of $G_{\delta}=K_{\delta} \vee (K_{n-(b+1)\delta-1}+(b\delta+1)K_1)$. Thus, the characteristic polynomial of $M_{\delta}$ is
	\begin{align*} f_\delta(x)&=x^3+(-b\delta-n+3)x^2+(2b^2\delta^2+3b\delta^2-2bn\delta+3b\delta+3\delta-5n+6)x-(b^2+b)\delta^3\\
		&\ \ \ \ +(bn+2b^2+b-1)\delta^2+(n-2bn+4b+2)\delta+4-4n,
	\end{align*}
and
	\begin{align*}
		f_{\delta}(x)-f_{s}(x)&=(\delta-s)[(-b+bs)x^2+(-3+3b-3\delta+3b\delta+2b^2\delta+ 3n-2bn\\
		&\ \ \ +(-5b+2b^2- 5b\delta +2bn)s+(2b-4b^2-2b^2\delta )s^2+2b^2s^3)x\\
		&\ \ \ -b^2s^4+(-b+4b^2+b^2\delta)s^3+(5 b-5b^2+2b\delta-2b^2\delta-bn)s^2\\
		&\ \ \ +(1-8b+2b^2+\delta-6b\delta-b^2 \delta-n+3bn)s\\
		&\ \ \ +(b\delta-2b+4)n-(b^2+b)\delta^2+(2b^2+b-4)\delta+4b-4]\\
		&\ \ \ \triangleq(\delta-s)H(x).
	\end{align*}	
Since $s\le \delta-1$, then we will prove that $H(x)>0$. In what follows, we will show $H(x)>0$ in two steps.

{\bf Step 1.} $H(n-1)>0$.
\small{\begin{align*}
	H(n-1)&=3(1- b+bs)n^2\\
	&\ \ \ +(2b^2s^3+(b-4b^2-2b^2\delta)s^2+(-1-6b+2b^2-5b\delta)s+(2b^2+4b-3)\delta+5b-2)n\\
	&\ \ \ -b^2s^4+(-b+2b^2+b^2\delta) s^3+(3 b-b^2+2b\delta)s^2+(1-2b+\delta-b\delta-b^2\delta)s\\
	&\ \ \ -(b^2+b)\delta^2-(2b+1)\delta-1.
\end{align*}}
Let $q(n)\triangleq H(n-1)$, then the symmetry axis of $q(n)$ is
\begin{align*}
	\hat{n}&=-\frac{2b^2s^3+(b-4b^2-2b^2\delta)s^2+(-1-6b+2b^2-5b\delta)s+(2b^2+4b-3)\delta+5b-2}{6(1- b+bs)}.
\end{align*}
Note that
\begin{align*}
	&\ \ \ -(2b^2s^3+(b-4b^2-2b^2\delta)s^2+(-1-6b+2b^2-5b\delta)s+(2b^2+4b-3)\delta+5b-2)\\
	&\ \ \
	=2+3\delta-4b\delta+s+5b\delta s+b(s-1)(5-s)-2b^2s(s-1)^2+2b^2\delta(s^2-1)\\
	&\ \ \
	 =2+3\delta-4b\delta+s+5b\delta s+b(s-1)[2bs(\delta-s)+2b\delta+2bs-s+5]>0.
\end{align*}	
Since $2\le s\le \delta-1$, we have
\begin{align*}
		\hat{n}&=-\frac{2b^2s^3+(b-4b^2-2b^2\delta)s^2+(-1-6b+2b^2-5b\delta)s+(2b^2+4b-3)\delta+5b-2}{6(1- b+bs)}\\
	&\ \ \
	<\frac{2+3\delta-4b\delta+s+5b\delta s+b(s-1)[2bs(\delta-s)+2b\delta+2bs-s+5]}{6b(s-1)}\\
	&\ \ \
	=\frac{2+3\delta-4b\delta+s+5b\delta s}{6b(s-1)}+\frac{5-s+2bs(\delta-s)+2b\delta+2bs}{6}\\
	&\ \ \
	<\frac{2+5b\delta s}{6b}+\frac{3+2bs\delta+2b\delta+2bs}{6}\\
	&\ \ \
	< \frac{2+5b\delta^2}{6b}+\frac{3+2b\delta^2+2b\delta+2b\delta}{6}\\
	&\ \ \
	<1+\frac{2}{3}b\delta+(\frac{5}{6}+\frac{b}{3})\delta^2.
\end{align*}	
Note that $\delta\ge3$, it can be checked that $1+\frac{2}{3}b\delta+(\frac{5}{6}+\frac{b}{3})\delta^2<2b\delta^2$, which implies that $q(n)$ is increasing with respect to $n\in[2b\delta^2,+\infty)$. By a simple calculation,
\begin{align*}
	q(n)&\ge q(2b\delta^2)\\
	&=12b^2(1+b(s-1))\delta^4+(-6 b+ 8 b^2 + 4 b^3 - 10 b^2 s - 4 b^3 s^2)\delta^3\\
	&\ \ \ \ +(-5 b + 9 b^2 + (-2 b - 12 b^2 + 4 b^3) s + (2 b^2 - 8 b^3) s^2 +
	4 b^3 s^3)\delta^2\\
	&\ \ \ \ +(-1 - 2 b + (1 - b - b^2) s + 2 b s^2 + b^2 s^3)\delta \\
	&\ \ \ \ -1 + (1 - 2 b) s + (3 b - b^2) s^2 + (-b + 2 b^2) s^3 - b^2 s^4.
\end{align*}
Next, we will prove that $q(2b\delta^2)>0$ progressively scaling. Since $\delta\ge s+1$ and $s\ge2$, we have
\begin{align*}
	 	&\ \ \
	 	12b^2(1+b(s-1))\delta^4+(-6 b+ 8 b^2 + 4 b^3 - 10 b^2 s - 4 b^3 s^2)\delta^3\\
	& =\delta^3[12b^2(1+b(s-1))\delta+(-6 b+ 8 b^2 + 4 b^3 - 10 b^2 s - 4 b^3 s^2)]\\
	& \ge\delta^3[12b^2(1+b(s-1))(s+1)+(-6 b+ 8 b^2 + 4 b^3 - 10 b^2 s - 4 b^3 s^2)] \\
	&=\delta^3(-6 b + 20 b^2 - 8 b^3 + 2 b^2 s + 8 b^3 s^2)>0.
\end{align*}
Then
 \begin{align*}
 	&\ \ \
 	\delta^2[(-6 b + 20 b^2 - 8 b^3 + 2 b^2 s + 8 b^3 s^2)\delta +(-5 b + 9 b^2 + (-2 b - 12 b^2 + 4 b^3) s \\
 	&\ \ \ \ \ + (2 b^2 - 8 b^3) s^2 +
 	4 b^3 s^3)]\\
 	&\ge \delta^2[(-6 b + 20 b^2 - 8 b^3 + 2 b^2 s + 8 b^3 s^2)(s+1)\\
 	&\ \ \ \ +(-5 b + 9 b^2 + (-2 b - 12 b^2 + 4 b^3) s + (2 b^2 - 8 b^3) s^2 +
 	4 b^3 s^3)]\\
 	&=\delta^2(-11 b + 29 b^2 - 8 b^3 + (-8 b + 10 b^2 - 4 b^3) s + 4 b^2 s^2 +
 	12 b^3 s^3).
 \end{align*}

Therefore
  \begin{align*}
  	&\ \ \
  \delta[(-11 b + 29 b^2 - 8 b^3 + (-8 b + 10 b^2 - 4 b^3) s + 4 b^2 s^2 +
  	12 b^3 s^3)\delta\\
  	&\ \ \ \ +(-1 - 2 b + (1 - b - b^2) s + 2 b s^2 + b^2 s^3)]\\
  	& \ge \delta[(-11 b + 29 b^2 - 8 b^3 + (-8 b + 10 b^2 - 4 b^3) s + 4 b^2 s^2 +
  	12 b^3 s^3)(s+1)\\
  	&\ \ \ \ +(-1 - 2 b + (1 - b - b^2) s + 2 b s^2 + b^2 s^3)]\\
  	& =\delta[-1 - 13 b + 29 b^2 -
  	8 b^3 + (1 - 20 b + 38 b^2 - 12 b^3) s\\
  	&\ \ \ + (-6 b + 14 b^2 -
  	4 b^3) s^2 + (5 b^2 + 12 b^3) s^3 + 12 b^3 s^4].
  \end{align*}

Finally, we get
    \small{\begin{align*}
  	&\delta[-1 - 13 b + 29 b^2 -
  	8 b^3 + (1 - 20 b + 38 b^2 - 12 b^3) s + (-6 b + 14 b^2 -
  	4 b^3) s^2 + (5 b^2 + 12 b^3) s^3 + 12 b^3 s^4]\\
  	&+(-1 + (1 - 2 b) s + (3 b - b^2) s^2 + (-b + 2 b^2) s^3 - b^2 s^4)\\
  	& > -1 - 13 b + 29 b^2 -
  	8 b^3 + (1 - 20 b + 38 b^2 - 12 b^3) s + (-6 b + 14 b^2 -
  	4 b^3) s^2 + (5 b^2 + 12 b^3) s^3 + 12 b^3 s^4\\
  	&\ \ \ \  +(-1 + (1 - 2 b) s + (3 b - b^2) s^2 + (-b + 2 b^2) s^3 - b^2 s^4)\\
  	&=-2 - 13 b + 29 b^2 -
  	8 b^3 + (2 - 22 b + 38 b^2 - 12 b^3) s + (-3 b + 13 b^2 -
  	4 b^3) s^2\\
  	&\ \ \ + (-b + 7 b^2 + 12 b^3) s^3 + (-b^2 + 12 b^3) s^4\\
  	& >11 b^3s^4+12 b^3s^3-
  	4 b^3s^2- 12 b^3 s-8 b^3 >0.
  \end{align*}}
According to the above calculation process, we obtain $q(n)\ge q(2b\delta^2)>0$, which implies that $H(n-1)>0$.

{\bf Step 2.} $H'(x)>0$ for $x\in[n-1,+\infty)$.

Recall that
  \begin{align*}
  	H(x)&=(-b+bs)x^2+(-3+3b-3\delta+3b\delta+2b^2\delta+ 3n-2bn\\
  	&\ \ \ +(-5b+2b^2- 5b\delta +2bn)s+(2b-4b^2-2b^2\delta )s^2+2b^2s^3)x\\
  	&\ \ \ -b^2s^4+(-b+4b^2+b^2\delta)s^3+(5 b-5b^2+2b\delta-2b^2\delta-bn)s^2\\
  	&\ \ \ +(1-8b+2b^2+\delta-6b\delta-b^2 \delta-n+3bn)s\\
  	&\ \ \ +(b\delta-2b+4)n-(b^2+b)\delta^2+(2b^2+b-4)\delta+4b-4.
  \end{align*}	
Then
   \begin{align*}
  	H'(x)&=2(-b+bs)x+(-3+3b-3\delta+3b\delta+2b^2\delta+ 3n-2bn\\
  	&\ \ \ +(-5b+2b^2- 5b\delta +2bn)s+(2b-4b^2-2b^2\delta )s^2+2b^2s^3)\\
  	& \ge 2(-b+bs)(n-1)+(-3+3b-3\delta+3b\delta+2b^2\delta+ 3n-2bn\\
  	&\ \ \ +(-5b+2b^2- 5b\delta +2bn)s+(2b-4b^2-2b^2\delta )s^2+2b^2s^3)\\
  	&=
  	2 b^2 s^3+ (2 b -
  	4 b^2 - 2 b^2 \delta) s^2 + (-7 b + 2 b^2 - 5 b \delta + 4 b n) s\\
  	&\ \ \ + 2 b^2 \delta + (5 +3 \delta-4n) b  + 3( n - \delta-1)\\
  	& \triangleq g(s).
  \end{align*}	
Next we prove that $g(s)>0$ for $2\le s\le \delta-1$. By direct calculation, we deduce that
  $$g'(s)=6 b^2 s^2+b (4 - 4 b (2 + \delta)) s+b(-7 + 2 b - 5 \delta + 4 n),$$
and the symmetry axis of $g'(s)$ is $\hat{s}=\frac{\delta}{3}+\frac{2}{3}-\frac{1}{3b}$.

Since $n\ge 2b\delta^2$ and $\delta\ge 3$,
$$g'(s)\ge g'(\frac{\delta}{3}+\frac{2}{3}-\frac{1}{3b})=4 b n-\frac{1}{3} (2+13 b+11 b \delta + 2 b^2 (1 + 4 \delta + \delta^2)).$$
Note that
 \begin{align*}
	g'(s)&\ge4 b n-\frac{1}{3} (2+13 b+11 b \delta + 2 b^2 (1 + 4 \delta + \delta^2))\\
	& \ge\frac{1}{3}[24b^2\delta^2-(2+13 b+11 b \delta + 2 b^2 (1 + 4 \delta + \delta^2))]\\
	& =\frac{1}{3}[22b^2\delta^2-2-13 b-11 b \delta -2 b^2 -8b^2 \delta]\\
	& \triangleq \frac{1}{3} v(b).
\end{align*}	
For $b\in [1,+\infty)$, we have
$$v'(b)=44b\delta^2-13-11\delta-4b-16b\delta>0,$$
Thus, $v'(b)>0$ and $v(b)\ge v(1)=22\delta^2-17-19\delta>0$.

Therefore, $g'(s)>0$ and $g(s)$ is increasing with respect to $s\in[2,\delta-1]$. Hence
$$g(s)\ge g(2)=(3 + 4 b) n+4 (2 b - 4 b^2 - 2 b^2\delta)+(2b^2-7b-3)\delta+20b^2-9b-3.$$
Since $\delta\ge3$, it can be checked that
 \begin{align*}
	g(2)&=(3 + 4 b) n+4 (2 b - 4 b^2 - 2 b^2\delta)+(2b^2-7b-3)\delta+20b^2-9b-3\\
	& >(3 + 4 b) 2b\delta^2 - 4(4b^2+2b^2\delta)-(7b+3)\delta\\
	& >6b\delta^2+8b^2\delta^2-24b^2\delta-10b\delta\\
	& >18b\delta+24b^2\delta-24b^2\delta-10b\delta\\
	& =8b\delta>0.
\end{align*}	
Thus, we have that $H'(x)>0$ for $x\in[n-1,+\infty).$

Combining with Step 1 and Step 2, we get $H(x)>0$ for $x\in[n-1,+\infty)$, which implies $f_{\delta}(x)>f_s(x)$ for $x\in[n-1,+\infty)$. Observe that $\min\{\mu(G_{\delta}),\mu(G_s)\}>n-1$, we have $\mu(G_s)>\mu(G_{\delta})$.

Furthermore, by (\ref{eq2}) and (\ref{e:3}),
$$\mu(G)\ge\mu(G_1)\ge\mu(G_s)>\mu(G_{\delta})=\mu(K_{\delta} \vee (K_{n-(b+1)\delta-1}+(b\delta+1)K_1),$$
a contradiction.  This completes the proof.
\end{proof}

Note that a perfect matching is a special odd $[1,b]$-factor when $b=1$. Let $b=1$, then we can obtain a condition in terms of distance spectral radius about perfect matching with given minimum degree.

\noindent\begin{corollary}  \ Let $G$ be a connected graph of even order $n\ge\max\{2\delta^2,10\delta+7\}$ with minimum degree $\delta\ge3$. If $\mu(G)\le \mu(K_{\delta} \vee (K_{n-2\delta-1}+(\delta+1)K_1))$, then $G$ has a perfect matching unless $G\cong K_{\delta} \vee (K_{n-2\delta-1}+(\delta+1)K_1).$
\end{corollary}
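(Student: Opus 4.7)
The plan is to derive this corollary directly from Theorem \ref{T1.4} by specializing to $b=1$. First, I would observe that an odd $[1,b]$-factor in the case $b=1$ is a spanning subgraph in which every vertex has odd degree with $1\le d_{G_0}(v)\le 1$, which forces $d_{G_0}(v)=1$ for every vertex. Such a spanning subgraph is precisely a perfect matching, so for $b=1$ the existence of an odd $[1,b]$-factor is equivalent to the existence of a perfect matching.

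Next I would verify that the hypotheses of Theorem \ref{T1.4} are met under the corollary's assumptions. The value $b=1$ is a positive odd integer, and the condition $\delta\ge 3$ is already imposed. The lower bound on $n$ in Theorem \ref{T1.4} is
$$n\ge\max\Bigl\{2b\delta^2,\ \bigl(\tfrac{3}{b}+5+2b\bigr)\delta+1+\tfrac{3(b+1)}{b^2}\Bigr\};$$
substituting $b=1$ yields
$$\max\bigl\{2\delta^2,\ (3+5+2)\delta+1+6\bigr\}=\max\{2\delta^2,\ 10\delta+7\},$$
which matches the hypothesis of the corollary exactly. Likewise the extremal graph $K_{\delta}\vee(K_{n-(b+1)\delta-1}+(b\delta+1)K_1)$ specializes to $K_{\delta}\vee(K_{n-2\delta-1}+(\delta+1)K_1)$, and the spectral hypothesis $\mu(G)\le \mu(K_{\delta}\vee(K_{n-2\delta-1}+(\delta+1)K_1))$ is simply the specialization at $b=1$ of the spectral hypothesis in Theorem \ref{T1.4}.

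With the hypotheses verified, Theorem \ref{T1.4} with $b=1$ produces an odd $[1,1]$-factor of $G$, that is, a perfect matching of $G$, unless $G\cong K_{\delta}\vee(K_{n-2\delta-1}+(\delta+1)K_1)$. This is exactly the conclusion of the corollary. There is no substantive obstacle in this deduction because it is a direct specialization; the only routine sanity check is that the extremal graph is well-defined, i.e.\ $n-2\delta-1\ge 1$, which is immediate from $n\ge 10\delta+7$.
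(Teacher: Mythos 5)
Your proposal is correct and follows exactly the paper's route: the paper also obtains this corollary by setting $b=1$ in Theorem \ref{T1.4}, noting that an odd $[1,1]$-factor is precisely a perfect matching and that the bound $\max\{2b\delta^2,(\tfrac{3}{b}+5+2b)\delta+1+\tfrac{3(b+1)}{b^2}\}$ specializes to $\max\{2\delta^2,10\delta+7\}$. Your verification of the hypotheses is, if anything, more explicit than the paper's one-line remark.
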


\section*{Statement}

This article has undergone further revisions and improvements, all of which were contributed by Ligong Wang. In recognition of his contributions, Ligong Wang is now acknowledged as a new co-author.
The manuscript has been updated to its current version to incorporate these changes. We affirm that all authors have reviewed and approved this update.

%\section*{Declarations}

%The authors declare that they have no conflict of interests.

%\section*{Ethical Approval}

%Not applicable

%\section*{Funding}

%This work was supported by the National Natural Science Foundation of China (Nos. 12001434 and 12271439), the Natural Science Basic Research Program of Shaanxi Province (Nos. 2022JM-006 and 2023-JC-YB-070) and Chinese Universities Scientific Fund (No. 2452020021).

%\section*{Availability of data and materials}

%Not applicable

%\section*{Author Contributions Statement}

%Z.Z. Xu obtained the main results, W.G. Xi improved the manuscript. All authors reviewed the manuscript.

\end{document}